\newtheorem{theorem}{Theorem}[section]
\newtheorem{lemma}[theorem]{Lemma}
\newtheorem{prop}[theorem]{Proposition}
\theoremstyle{definition}
\newtheorem{ass}[theorem]{Assumption}
\theoremstyle{remark}
\newtheorem{remark}[theorem]{Remark}
\newcommand{\rr}{{\mathbb R}}
\newcommand{\N}{{\mathbb N}}
\date{\today}
\begin{document}

\sloppy
\title[On mixed fractional SDEs with discontinuous drift coefficient]{On mixed fractional SDEs with discontinuous drift coefficient} 

\author{Ercan S\"onmez}
\address{Ercan S\"onmez, Department of Statistics, University of Klagenfurt, Universit\"atsstra{\ss}e 65--67, 9020 Klagenfurt, Austria}
\email{ercan.soenmez\@@{}aau.at} 

\begin{abstract}
We prove existence and uniqueness of the solution for a class of mixed fractional stochastic differential equations with discontinuous drift driven by both standard and fractional Brownian motion. Additionally, we establish a generalized It\^{o} rule valid for functions with absolutely continuous derivative and applicable to solutions of mixed fractional stochastic differential equations with Lipschitz coefficients, which plays a key role in our proof of existence and uniqueness. The proof of such a formula is new and relies on showing the existence of a density of the law under mild assumptions on the diffusion coefficient.
\end{abstract}

\keywords{Mixed stochastic differential equation, discontinuous drift, long-range dependence, It\^{o} formula, absolute continuity}
\subjclass[2010]{Primary 60H10; Secondary 65C30, 60H99.}
\maketitle

\allowdisplaybreaks
\section{Introduction}
Consider an autonomous stochastic differential equation (SDE)
\begin{align} \label{int1}
dX_t = a(X_t) dt + b(X_t) dW_t + c(X_t) dB_t^H,
\end{align}
with $a,b,c \colon \rr \to \rr$ being measurable functions, $W$ is a standard Brownian motion and $B^H$ is a fractional Brownian motion with Hurst index $H \in (\frac12,1)$.

If $c \equiv 0$ the corresponding SDE fits into the Markovian case and allows one to use the It\^{o} theory in order to investigate the SDE, see \cite{KS, LS, ZV, V2}  and references therein. Whereas if $b \equiv 0$ we find ourselves in the purely fractional case. With $H$ satisfying $H \in (\frac12 ,1)$ one can define stochastic integrals with respect to fractional Brownian motion utilizing a pathwise approach. A variety of methods have been developed and used in order to study such corresponding (stochastic) differential equations, see \cite{KK, KU, Ly, Lin, young, za, Nual, Nual1, Nual2, hu, Ruz}, in particular borrowing ideas and results from deterministic (geometric) differential equation theory. 

An eligible motivation to consider mixed SDEs emanates from applications in financial mathematics. Including both standard and fractional Brownian motion for the purpose of modeling randomness in a financial market enriches the model with flexibility, more particularly enables to capture and distinguish between two sources of randomness. Typically, standard Brownian motion models a white noise possessing no memory, whereas fractional Brownian motion models a noise having a long range property.

Questions regarding existence and uniqueness of a solution to mixed SDEs have been addressed in \cite{gu, KU2, MP, Mish, Mish2} under certain regularity assumptions on the coefficients $a,b,c$, see the aforementioned references and Section 2 for details. A main contribution of this work is the establishment of existence and uniqueness of solutions to mixed SDEs with irregular drift, in which the drift coefficient is allowed to be discontinuous.

In the Markovian case $c \equiv 0$ considerable effort has been made in the study of the corresponding SDE with discontinuous drift coefficient, see e.g. \cite{LS, LS2, ZV, TMG} and references therein, just to mention a few. Comparatively little is know for purely fractional SDEs, i.e. $b \equiv 0$, with discontinuous drift coefficient, see \cite{Fan, Suo, Mishbook} for the case $H > \frac12.$ In \cite[Theorem 3.5.14]{Mishbook} the existence of a strong solution is proven for purely fractional SDEs with additive noise, where the drift coefficient is given by the discontinuous function $a(x) = \operatorname{sign} (x)$ for all $x \in \rr$ and the Hurst index $H$ is restricted to $H \in (\frac12, \frac{1+\sqrt{5}}{4})$, see also \cite[Theorem 1]{Mishn} for a related result. In this paper, using a significantly different approach, we provide results on the existence and uniqueness of solutions to mixed SDEs, where we allow the drift coefficient to be in the more general class of piecewise Lipschitz continuous functions and we include all $H \in (\frac{1}{2},1)$.

The main obstacle one has to face in studying mixed SDEs arises from the fact that the two stochastic integrals involved in \eqref{int1} are of crucially different nature. The integral with respect to standard Brownian motion is a classical It\^{o} integral, the integral with respect to fractional Brownian motion is a pathwise Riemann-Stieltjes integral.

Let us outline the structure in achieving our main result. Whereas the proofs in \cite{Mish} use approximation theory and partially the approach in \cite{Nual2} we will borrow ideas from the purely non-fractional case and make them accessible in the setting of mixed SDEs. A key idea is to use a transformation technique originating from \cite{V1,V2, LS}. We will employ a transformation used in \cite{LS2}. More precisely, purposing to reduce the question to existence and uniqueness of solutions of classical equations the SDE is transformed in such a way that the discontinuity of the drift is removed, whereas the (regularity) properties of both the diffusion and fractional coefficient are preserved. This intention is, however, accompanied by challenges. In order to be able to apply the transform one needs to employ a generalized It\^{o} formula valid for convex functions with absolutely continuous derivative, which is not disposable so far. Therefor, another main contribution of this paper is to establish such a formula. Here we provide a novel proof, which is also new in the Markovian case. Our approach combines partially the procedure in \cite{Fan,LRS}. This is achieved by providing results on the absolute continuity of the law of mixed stochastic differential equations, which is then used in order to prove a variant of a generalized It\^{o} formula.

Here is a summary of the rest of this paper. In Section 2 we rigorously formulate the problem under consideration, also formulate and give a proof of our result on existence and uniqueness. As already mentioned, the proof invokes the It\^{o} formula generalized to convex functions. In Section 3 we first give a proof of a classical It\^{o} formula for mixed SDEs, in order to make the paper self-contained. Subsequent Section 4 is devoted to the study of the existence of a density of the law of mixed SDEs, which conclusively enables to prove a generalized It\^{o} formula.

\section{Existence and uniqueness}
Let $T \in (0, \infty)$ and $(\Omega, \mathcal{F}, (\mathbb{F}_t)_{t \in [0, T]}, P)$ be a filtered probability space satisfying the usual conditions. Suppose that $W=(W_t)_{t \in [0, T]}$ is a standard Brownian motion and $B^H=(B^H_t)_{t \in [0, T]}$ is a fractional Brownian motion with Hurst parameter $H \in (\frac{1}{2},1)$ independent of $W$, i.e. $B^H$ is a centered Gauss process with covariance function $R_H$ given by
$$R_H(t,s) = \mathbb{E} [ B_t^H B_s^H] = \frac{1}{2} \Big( |t|^{2H} + |s|^{2H} - |t-s|^{2H} \Big) , \quad t,s \in [0,T].$$
Let $a,b,c \colon  \rr \to \rr$ be measurable functions. We consider the following mixed stochastic differential equation
\begin{align} \label{sde1}
	 \begin{split}
		 X_t & = X_0+ \int_0^t a(X_s) ds + \int_0^t b(X_s) dW_s + \int_0^t c(X_s) dB_t^H , \quad t \in [0,T], \\
		 X_0 & = \xi \in \rr.
	 \end{split}
\end{align}
Under the assumptions that $a,b,c$ are Lipschitz and $c$ is differentiable with bounded and Lipschitz continuous derivative $c'$, it is known from \cite[Theorem 3.1]{Mish} that Equation \eqref{sde1} admits a unique solution. In fact, its proof shows that the following holds.

\begin{lemma} \label{th1}
	Assume that there exists $K \in (0, \infty)$ such that for all $x,y,x_1,x_2,x_3,x_4 \in \rr$
	\begin{align*}
		& | a(x) - a(y)| + | b(x) - b(y)| + | c(x) - c(y)| \leq K|x-y|, \\
		& | c(x_1) - c(x_2) - c(x_3) + c(x_4) | \leq K |x_1 - x_2 - x_3 + x_4| + K |x_1 - x_3| \Big( |x_1 - x_2| + |x_3 - x_4| \Big) .
	\end{align*}
	Then Equation \eqref{sde1} admits a unique solution.
\end{lemma}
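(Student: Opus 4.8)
The plan is to obtain the solution of \eqref{sde1} as a fixed point of the integral operator
\[
(\Phi Y)_t = X_0 + \int_0^t a(Y_s)\,ds + \int_0^t b(Y_s)\,dW_s + \int_0^t c(Y_s)\,dB^H_s, \qquad t \in [0,\tau],
\]
first on a short interval $[0,\tau]$ and then by concatenation on all of $[0,T]$. Since the two hypotheses of the lemma are weaker than the differentiability assumption on $c$ imposed in \cite[Theorem 3.1]{Mish}, I would not invoke that result directly but rather run the contraction argument explicitly and track where each hypothesis is used. Because $H > \tfrac12$, the paths of $B^H$ are almost surely $\beta$-Hölder continuous for every $\beta \in (\tfrac12, H)$; fixing such a $\beta$ and an exponent $\alpha \in (1-\beta, \beta]$ (so that $\alpha + \beta > 1$), I would work with adapted processes $Y$ controlled in a norm combining the pathwise Hölder norm $\|Y\|_\alpha := \|Y\|_\infty + [Y]_\alpha$, where $[\,\cdot\,]_\alpha$ denotes the Hölder seminorm, with $L^p(\Omega)$ moment bounds, the former adapted to the fractional integral and the latter to the Itô integral.

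The three integrals are estimated by different tools. The drift is handled by the Lipschitz bound on $a$; the Itô integral $\int_0^\cdot b(Y_s)\,dW_s$ is controlled in $L^p(\Omega)$ via the Burkholder–Davis–Gundy inequality together with the Lipschitz bound on $b$; and the fractional integral $\int_0^\cdot c(Y_s)\,dB^H_s$ is a pathwise Young integral, for which I would use the quantitative estimate
\[
\Bigl| \int_s^t f_r\,dg_r - f_s\,(g_t - g_s) \Bigr| \le C\,[f]_\alpha\,[g]_\beta\,|t-s|^{\alpha+\beta}, \qquad \alpha + \beta > 1.
\]
Applied with $f = c(Y)$ and $g = B^H$, and using the Lipschitz hypothesis to bound $\|c(Y)\|_\alpha$ by a constant times $\|Y\|_\alpha$, this yields control of both $\|\,\cdot\,\|_\infty$ and $[\,\cdot\,]_\alpha$ of the fractional integral. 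Combining the three bounds gives a self-map property of $\Phi$ on a ball once $\tau$ is small, which is the a priori estimate behind local existence.

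The decisive point is the contraction estimate for two candidate solutions $X$ and $\widetilde X$, where the difference of fractional integrals $\int_s^t \bigl( c(X_r) - c(\widetilde X_r)\bigr)\,dB^H_r$ must be bounded in Hölder norm rather than merely in supremum norm. This is exactly where the second hypothesis enters: applying it with $(x_1,x_2,x_3,x_4) = (X_t, \widetilde X_t, X_s, \widetilde X_s)$ gives
\[
\bigl| c(X_t) - c(\widetilde X_t) - c(X_s) + c(\widetilde X_s) \bigr| \le K\,\bigl| (X-\widetilde X)_t - (X-\widetilde X)_s \bigr| + K\,|X_t - X_s|\,\bigl( |X_t - \widetilde X_t| + |X_s - \widetilde X_s| \bigr),
\]
whence, dividing by $|t-s|^\alpha$,
\[
[\,c(X) - c(\widetilde X)\,]_\alpha \le K\,[\,X - \widetilde X\,]_\alpha + 2K\,[X]_\alpha\,\|X - \widetilde X\|_\infty .
\]
A naive Lipschitz bound on $c(X) - c(\widetilde X)$ would only reproduce a Hölder seminorm of $X - \widetilde X$ with a prefactor that cannot be made small, so the contraction would fail to close; handling this term is the step I expect to be the main obstacle, and the second hypothesis is precisely tailored to overcome it. Feeding the displayed bound, together with the Lipschitz bounds for $a$ and $b$ and the BDG estimate, into the Young estimate and choosing $\tau$ small enough that every prefactor is strictly less than $1$ makes $\Phi$ a contraction, giving a unique local solution. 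Since $\tau$ can be chosen to depend only on $K$, $\beta$ and the (pathwise finite) Hölder norm of $B^H$ on $[0,T]$, I would finally patch finitely many local solutions together to obtain the unique solution on $[0,T]$.
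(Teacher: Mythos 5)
The paper does not actually reprove this lemma: it observes that the differentiability of $c$ assumed in \cite[Theorem 3.1]{Mish} is used there only to produce the second-order difference estimate, so the proof of that theorem already establishes the lemma under the two displayed hypotheses, and the paper simply quotes it. Your proposal instead attempts a self-contained contraction argument. You do identify correctly the exact role of the second hypothesis: it yields $[\,c(X)-c(\widetilde X)\,]_\alpha \le K\,[\,X-\widetilde X\,]_\alpha + 2K\,[X]_\alpha\,\|X-\widetilde X\|_\infty$, which is indeed how that condition functions in \cite{Mish}.

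However, there is a genuine gap at the core of your scheme: you never explain how the pathwise estimates for the Young integral and the $L^p(\Omega)$ estimates for the It\^{o} integral are to coexist inside a single Banach fixed-point argument, and this is the central difficulty of mixed SDEs, not a technicality. The contraction factor produced by the fractional integral is random: it is of order $[B^H]_\beta\,\tau^{\alpha+\beta-1}\,(1+[X]_\alpha)$, where $[B^H]_\beta$ is a.s.\ finite but unbounded, and $[X]_\alpha$ (the H\"older seminorm of a candidate in your ball, which contains an It\^{o} integral) is likewise random with no deterministic bound. So, contrary to your claim, $\tau$ cannot depend only on $K$, $\beta$ and $\|B^H\|_\beta$; it must also control the solution's own H\"older norm, hence must be taken random. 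But with $\tau=\tau(\omega)$ random, the BDG/It\^{o}-isometry step no longer produces a contraction in any fixed deterministic norm (the function space itself becomes $\omega$-dependent), adaptedness of the fixed point needs a separate argument, and the concluding "patching of finitely many local solutions" fails because the number of patches is random and a priori unbounded, the local interval length shrinking as the H\"older norms grow. If, conversely, you keep $\tau$ deterministic and try to close the estimate in a moment norm, the cross term $[B^H]_\beta\,[X]_\alpha\,\|X-\widetilde X\|_\infty$ forces a H\"older/Cauchy--Schwarz splitting that places the difference $X-\widetilde X$ in a stronger integrability class than the one you are contracting in, so the map is not a contraction of any single space. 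This obstruction is precisely why the proofs in the literature (\cite{Mish}, and also \cite{gu}) resort to heavier machinery --- weighted norms, stopping-time localizations, or a Yamada--Watanabe-type combination of pathwise uniqueness with weak existence --- and why the paper invokes the proof of \cite[Theorem 3.1]{Mish} rather than rerunning a Picard iteration. As written, your argument would need to be completed by exactly these missing steps.
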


First we show that Lemma \ref{th1} implies the following result, which has slightly weaker assumptions than \cite[Theorem 3.1]{Mish}.

\begin{prop} \label{th2}
	Assume that there exists $K \in (0, \infty)$ such that for all $x,y \in \rr$,
	\begin{align*}
	& | a(x) - a(y)| + | b(x) - b(y)| + | c(x) - c(y)| \leq K|x-y|.
	\end{align*}
	Moreover, assume that the function $c$ has a bounded Lebesgue-density $c' \colon \rr \to \rr$ which is Lipschitz continuous. Then Equation \eqref{sde1} admits a unique solution.
\end{prop}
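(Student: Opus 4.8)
The plan is to deduce Proposition \ref{th2} from Lemma \ref{th1} by checking that the hypotheses of the proposition force both structural conditions appearing in the lemma. The first condition, the joint Lipschitz bound on $a$, $b$ and $c$, is assumed verbatim in Proposition \ref{th2}, so nothing has to be done there. All the work goes into the second, bilinear-type estimate for $c$, and the whole point is to see that it is a consequence of $c$ being absolutely continuous with a bounded and Lipschitz derivative $c'$. Once both conditions hold with a common constant, Lemma \ref{th1} applies and yields existence and uniqueness.

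To verify the second condition I would exploit absolute continuity and write the two increments of $c$ as integrals of $c'$, reparametrised from the right endpoints. Setting $p = x_1 - x_2$ and $q = x_3 - x_4$, the fundamental theorem of calculus gives, as signed integrals,
\begin{align*}
c(x_1) - c(x_2) = \int_0^p c'(x_1 - \rho)\,d\rho, \qquad c(x_3) - c(x_4) = \int_0^q c'(x_3 - \rho)\,d\rho .
\end{align*}
Subtracting and splitting $\int_0^p = \int_0^q + \int_q^p$ produces the exact identity
\begin{align*}
c(x_1) - c(x_2) - c(x_3) + c(x_4) = \int_0^q \big[\, c'(x_1 - \rho) - c'(x_3 - \rho) \,\big]\,d\rho + \int_q^p c'(x_1 - \rho)\,d\rho .
\end{align*}
The first integrand is controlled by Lipschitz continuity of $c'$, whose two arguments differ by exactly $x_1 - x_3$, so the first term is bounded by $\operatorname{Lip}(c')\,|x_1 - x_3|\,|q|$; the second integrand is controlled by the bound on $c'$, so the second term is bounded by $\|c'\|_\infty\,|p - q|$. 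Since $|q| = |x_3 - x_4| \le |x_1 - x_2| + |x_3 - x_4|$ and $p - q = x_1 - x_2 - x_3 + x_4$, this is precisely the required estimate with $K = \max\{\operatorname{Lip}(c'),\ \|c'\|_\infty\}$, enlarged if necessary to absorb the Lipschitz constant of the first condition.

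The step I expect to be the crux is the choice of alignment in the identity above. A naive Taylor expansion of each increment separately about $x_1$ and $x_3$ produces remainder terms of order $|x_1 - x_2|^2$ and $|x_3 - x_4|^2$, which are \emph{not} dominated by the right-hand side of the target inequality (consider configurations with $x_1 = x_3$ and large increments); similarly, splitting the difference as $[c(x_1) - c(x_2)] - [c(x_3) - c(x_4)]$ and estimating each factor by the mean value theorem loses the cancellation encoded in the second difference. Reparametrising both intervals from the common right endpoints and cutting the integral at the shorter length $|q|$ is exactly what makes the cross term see the separation $|x_1 - x_3|$ rather than $|x_2 - x_4|$, and makes the remainder see the genuine second difference $x_1 - x_2 - x_3 + x_4$ rather than a quadratic quantity. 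With this estimate in hand, the proof closes by invoking Lemma \ref{th1}.
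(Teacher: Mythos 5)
Your proof is correct, and it follows the same overall strategy as the paper --- reduce to Lemma \ref{th1} and verify its bilinear second-difference condition for $c$ via the fundamental theorem of calculus --- but the key identity you use is a genuinely different decomposition. The paper pairs $(x_1,x_3)$ with $(x_2,x_4)$: it writes $c(x_1)-c(x_3)=\int_0^1 (x_1-x_3)\,c'\big(\theta x_1+(1-\theta)x_3\big)\,d\theta$, likewise for $(x_2,x_4)$, and then adds and subtracts the cross term $\int_0^1 (x_1-x_3)\,c'\big(\theta x_2+(1-\theta)x_4\big)\,d\theta$; in the resulting Lipschitz term the prefactor is $|x_1-x_3|$ and the arguments of $c'$ differ by $\theta(x_1-x_2)+(1-\theta)(x_3-x_4)$. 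You instead pair $(x_1,x_2)$ with $(x_3,x_4)$, anchor both integrals at the right endpoints, and split the domain of integration at $q=x_3-x_4$; in your Lipschitz term the integration length is $|x_3-x_4|$ while the arguments of $c'$ differ by exactly $x_1-x_3$. The two decompositions are dual to one another --- the roles of $|x_1-x_3|$ and of the increment lengths are exchanged between the prefactor and the argument of $\operatorname{Lip}(c')$ --- and both land on the required bound $\|c'\|_\infty\,|x_1-x_2-x_3+x_4| + \operatorname{Lip}(c')\,|x_1-x_3|\big(|x_1-x_2|+|x_3-x_4|\big)$, after which Lemma \ref{th1} applies with $K$ taken as a common constant. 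Neither route is more general here; if anything, yours yields the marginally sharper cross term $\operatorname{Lip}(c')\,|x_1-x_3|\,|x_3-x_4|$, while the paper's convex-combination form is the one that would carry over verbatim to vector-valued coefficients, where your interval-splitting of $\int_0^p$ at $q$ has no direct analogue. Your side remark about why naive two-sided Taylor expansions fail (quadratic remainders that the right-hand side does not dominate) is also accurate.
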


\begin{proof}
	Let $x_1,x_2,x_3,x_4 \in \rr$ and $M_1, M_2, \ldots$ be unspecified constants. By assumption
	\begin{align*}
		c(x_1) - c(x_3) & = \int_{x_3}^{x_1} c' (u) du = \int_0^1 (x_1 - x_3) c' \big( \theta x_1 + (1-\theta) x_3 \big) d\theta .
	\end{align*}
	Thus
	\begin{align*}
		\begin{split}
			c(x_1) - c(x_2) - c(x_3) + c(x_4) & =  \int_0^1 (x_1 - x_3) c' \big( \theta x_1 + (1-\theta) x_3 \big) d\theta \\
			& \quad - \int_0^1 (x_2 - x_4) c' \big( \theta x_2 + (1-\theta) x_4 \big) d\theta \\
			& =  \int_0^1 (x_1 - x_2 - x_3 + x_4) c' \big( \theta x_2 + (1-\theta) x_4 \big) d\theta \\
			& \quad +  \int_0^1 (x_1 - x_3) \Big( c' \big( \theta x_1 + (1-\theta) x_3 \big) - c' \big( \theta x_2 + (1-\theta) x_4 \big) \Big) d\theta .
		\end{split}
	\end{align*}
	From this and the assumptions on $c'$ we obtain
	\begin{align*}
		| c(x_1) - c(x_2) - c(x_3) + c(x_4) | & \leq M_1 | x_1 - x_2 - x_3 + x_4 | \\
		& \quad + |x_1 - x_3| \cdot \int_0^1 M_2 | \theta x_1 + (1- \theta) x_3 - \theta x_2 - (1- \theta) x_4| d \theta \\
		& \leq M_3 | x_1 - x_2 - x_3 + x_4 | + M_4 |x_1 - x_3| \Big( |x_1 - x_2| + |x_3 - x_4| \Big) .
	\end{align*}
	Thus, Proposition \ref{th2} follows from Lemma \ref{th1}.
\end{proof}

Let us first recall the definition of the fractional integral appearing in equation \eqref{sde1}, which is an extension of the Stieltjes integral, see \cite{za1}. Let $a,b \in \rr$ with $a<b$. For $\alpha \in (0,1)$ and a function $f \colon [a,b] \to \rr$ the Weyl derivatives, denoted by $D_{a+}^\alpha f$ and $D_{b-}^\alpha f$, are defined by
\begin{align*}
	D_{a+}^\alpha f (x) & = \frac{1}{\Gamma (1- \alpha)} \Big( \frac{f(x)}{(x-a)^\alpha} + \alpha \int_a^x \frac{f(x)-f(y)}{(x-y)^{\alpha+1}} dy \Big) , \quad x\in (a,b), \\
	D_{b-}^\alpha f (x) &= \frac{(-1)^\alpha}{\Gamma (1- \alpha)} \Big( \frac{f(x)}{(b-x)^\alpha} + \alpha \int_x^b \frac{f(x)-f(y)}{(y-x)^{\alpha+1}} dy \Big) , \quad x\in (a,b),
\end{align*}
provided that $D_{a+}^\alpha f \in L_p$ and $D_{b-}^\alpha f \in L_p$ for some $p \geq 1$, respectively. The convergence of the above integrals at the singularity $y=x$ holds pointwise for almost all $x \in (a,b)$ if $p=1$ and in $L_p$-sense if $p \in (1, \infty)$. Denote by $g_{b-}$ the function given by
$$ g_{b-}(x) = g(x) - g(b-), \quad x \in (a,b).$$
Assume that $D_{a+}^\alpha f \in L_1$ and $D_{b-}^{1-\alpha} g_{b-} \in L_\infty$. Then the generalized Stieltjes or fractional integral of $f$ with respect to $g$ is defined as
$$\int_a^b f(x) dg(x) := (-1)^\alpha \int_a^b D_{a+}^\alpha f(x) D_{b-}^{1-\alpha} g_{b-} (x) dx.$$

	Let $\alpha \in (1-H, \frac{1}{2})$ and $\lambda \in (0,1]$. In the following denote by $W_0^{\alpha, \infty}$ the space of measurable functions $g \colon [0,T] \to \rr$ such that
\begin{align}\label{wnorm}
\| g\|_{\alpha, \infty} := \sup_{t \in [0,T]} \Big( |g(t)| + \int_0^t \frac{|g(t) - g(s)|}{(t-s)^{\alpha +1}} ds \Big) < \infty
\end{align}
and denote by $C^\lambda$ the space of $\lambda$-H\"older continuous functions $g \colon [0,T] \to \rr$ equipped with the norm
$$ \|g\|_\lambda :=  \sup_{t \in [0,T]}  |g(t)| + \sup_{0 \leq s < t \leq T}  \frac{|g(t) - g(s)|}{(t-s)^{\lambda}}.$$
It holds that for all $\varepsilon \in (0, \alpha)$
$$ C^{\alpha + \varepsilon} \subset W_0^{\alpha, \infty} \subset  C^{\alpha - \varepsilon}.$$
Let $f \in C^\lambda$ and $g \in C^\mu$ with $\lambda, \mu \in (0,1]$ such that $\lambda + \mu >1$. It is a well-known result, see \cite[Theorem 4.2.1]{za1}, that under this condition the fractional integral $\int_0^T f(x) dg(x)$ exists and agrees with the corresponding Riemann-Stieltjes integral. In particular, we have $D_{0+}^\alpha f \in L_1$ and $D_{T-}^{1-\alpha} g_{T-} \in L_\infty$.

Assume that $Y=(Y_t)_{t \in [0,T]}$ satisfies $Y \in C^\lambda$ almost surely for all $\lambda \in (0, \frac12)$. Then according to the aformentioned remarks the integral $\int_0^T c(Y_r) dB_r^H$ is well-defined, when $c \colon \rr \to \rr$ is Lipschitz continuous. Let $\lambda \in (0, \frac12)$ and $\beta \in (0,H)$ with $\lambda+\beta >1.$ Similarly to \cite[equation (3.8)]{Fan} one can prove the estimate
\begin{align} \label{neu1}
	\begin{split}
		\Big| \int_s^t c(Y_r) dB_r^H \Big| & \leq K \|B^H\|_\beta \Bigg( \int_s^t |c(X_r)| (r-s)^{-\alpha} (t-r)^{\alpha + \beta -1} dr \\
		& \quad + \|c\|_1 \| Y\|_\lambda \int_s^t (r-s)^{\lambda - \alpha} (t-r)^{\alpha + \beta -1} dr \Bigg)
	\end{split}
\end{align}
for $\alpha \in (1-H, \frac12)$ and some constant $K \in (0, \infty)$.

The goal of this paper is to study mixed SDEs with irregular coefficients. In particular the function $a$ is allowed to be discontinuous. It turns out that we can prove existence and uniqueness of such SDEs under the following assumption.
\begin{ass} \label{ass1} 
	\begin{itemize}
		\item[]
		\item [(A1)] The function $a$ is piecewise Lipschitz according to ~\cite[Definition 2.1]{LS} and its discontinuity points are given by $\xi_1 < \ldots < \xi_k \in \rr$ for some $k \in \N$, that is $a$ is Lipschitz continuous on each of the intervals $(-\infty, \xi_1)$, $(\xi_k, \infty)$ and $(\xi_j,\xi_{j+1})$, $1 \leq j \leq k-1$.
		\item [(A2)] The function $b$ is Lipschitz continuous on $\rr$ and $b (\xi_i) \neq 0$ for all $i \in \{1, \ldots, k\}$.
		\item [(A3)] The function $c$ is Lipschitz continuous with bounded derivative $c'$ which is Lipschitz continuous on $\rr$ as well and $c (\xi_i) = 0$ for all $i \in \{1, \ldots, k\}$.
	\end{itemize}
\end{ass}
We stress that the above assumptions are satisfied by a variety of (practical) examples. One of such is given by the SDE
$$ dX_t = -\operatorname{sign} (X_t) dt + dW_t + X_t dB_t^H, \quad X_0 = \xi \in \rr.$$

Our main Theorem now reads as follows.

\begin{theorem} \label{th3}
	Under Assumption \ref{ass1} Equation \eqref{sde1} admits a unique strong solution.
\end{theorem}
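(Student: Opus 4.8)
The plan is to remove the discontinuity of the drift by a Zvonkin--Veretennikov type transformation, following the construction in \cite{LS2}, and thereby reduce the problem to the Lipschitz setting already covered by Proposition \ref{th2}. Concretely, I would first construct a bijection $G\colon\rr\to\rr$ of class $C^1$ whose derivative $G'$ is Lipschitz continuous (so that $G''$ exists a.e., is bounded, and is Lipschitz away from the points $\xi_i$), enjoying two properties: $G$ is bi-Lipschitz, i.e. $0<\inf G'\le\sup G'<\infty$, and $G''$ has a prescribed jump at each $\xi_i$. One sets $G(x)=x+\sum_{i=1}^k\gamma_i\,\phi(x-\xi_i)$, where $\phi(u)=u|u|$ truncated by a smooth cut-off supported in a small neighbourhood of $0$, and where the constants $\gamma_i$ are chosen so that the jump of $G''$ at $\xi_i$ equals $-2G'(\xi_i)\big(a(\xi_i+)-a(\xi_i-)\big)/b(\xi_i)^2$. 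Choosing the cut-off radius small enough keeps $G'$ uniformly close to $1$, which makes $G$ bi-Lipschitz; the division by $b(\xi_i)^2$ is exactly where Assumption (A2) is used.

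Next I would introduce the transformed coefficients
\[
\tilde a=\Big(G'\,a+\tfrac12 G''\,b^2\Big)\circ G^{-1},\qquad \tilde b=(G'\,b)\circ G^{-1},\qquad \tilde c=(G'\,c)\circ G^{-1},
\]
and verify that they satisfy the hypotheses of Proposition \ref{th2}. The jump of $G''$ was designed precisely so that $G'(x)a(x)+\tfrac12 G''(x)b(x)^2$ is continuous across each $\xi_i$; together with the local Lipschitz behaviour of all the ingredients and the fact that $G'-1$ has compact support, this yields that $\tilde a,\tilde b,\tilde c$ are globally Lipschitz. For the extra requirement on $\tilde c$ I would compute $\tilde c'=\big((G''c+G'c')/G'\big)\circ G^{-1}$ and observe that the only source of a discontinuity is the term $G''c$ at the points $\xi_i$; since $c(\xi_i)=0$ by Assumption (A3), this term is continuous there, so $\tilde c$ has a bounded Lipschitz derivative. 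Hence Proposition \ref{th2} applies and the transformed equation $dZ_t=\tilde a(Z_t)\,dt+\tilde b(Z_t)\,dW_t+\tilde c(Z_t)\,dB_t^H$, $Z_0=G(\xi)$, has a unique solution $Z$.

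It then remains to transfer existence and uniqueness back through $G$. For existence I would set $X:=G^{-1}(Z)$ and apply the generalized It\^o formula of Section 4 to $G^{-1}$ and the process $Z$: since $Z$ solves an equation with Lipschitz coefficients and $G^{-1}$ again has a Lipschitz derivative, the formula is directly applicable, and a computation using the defining relations of $\tilde a,\tilde b,\tilde c$ shows that $X$ solves \eqref{sde1} (recall that, because $H>\tfrac12$, the $B^H$-integral is pathwise and contributes no second-order term, so the only It\^o correction is $\tfrac12 (G^{-1})''\tilde b^2$). For uniqueness I would argue conversely: given any solution $X$ of \eqref{sde1}, I would show that $Z:=G(X)$ solves the transformed equation and conclude $X=G^{-1}(Z)$ from the uniqueness already obtained.

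I expect this last step to be the main obstacle. Applying the generalized It\^o formula to $G$ and $X$ requires knowing that the formula is valid along $X$, whereas the formula of Section 4 is established for solutions of equations with \emph{Lipschitz} coefficients, while here $a$ is only piecewise Lipschitz (and merely bounded measurable along the path). The resolution is that the proof of the It\^o formula really rests on the marginal laws of the process admitting densities, which is a statement about the diffusion part only; since $b$ is nondegenerate at the relevant points and $a$ has at most linear growth, the absolute-continuity result of Section 4 extends to $X$, and the discontinuity of $a$ is harmless because the drift enters the formula merely as a continuous finite-variation term. Verifying this extension of the density estimate to bounded measurable drift, and checking that the pathwise fractional integral is stable under the substitution $Z=G(X)$ using the $C^\lambda$-regularity of solutions and an estimate of the type \eqref{neu1}, is where the bulk of the work lies.
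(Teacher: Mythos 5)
Your proposal is correct and follows essentially the same route as the paper: the paper likewise transforms the equation by a function $G$ (quoted from \cite[Lemma 7]{TMG} rather than built by hand, but it is the same construction, with $G(\xi_i)=\xi_i$, with $\tilde a$, $\tilde b$ exactly as you define them, and with $\tilde c=(G'\,c)\circ G^{-1}$ shown to have a bounded Lipschitz derivative via (A3)), and then concludes by Proposition \ref{th2} together with the generalized It\^o formula of Theorem \ref{genIto}. The point you flag as the main obstacle --- that Theorem \ref{genIto} is stated for solutions of Lipschitz-coefficient equations, so applying it along a solution of \eqref{sde1} with discontinuous drift requires extending the density result (Lemma \ref{d}) to merely bounded measurable drift --- is a genuine one, and in fact the paper applies Theorem \ref{genIto} to $G(X_t)$ at this very step without comment, so your treatment is, if anything, more careful than the paper's own.
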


\begin{proof}
	Let $\Theta = \{ \xi_1, \ldots, \xi_k\}$ and $U = \rr \setminus \Theta$. Recall from \cite[Lemma 7]{TMG} that there is a function $G \colon \rr \to \rr$ satisfying the following:
	\begin{itemize}
		\item $G$ is Lipschitz continuous, differentiable on $\rr$ with $0 < \inf_{x \in \rr} G'(x) \leq \sup_{x \in \rr} G'(x) < \infty$
		\item $G$ has an inverse $G^{-1} \colon \rr \to \rr$ that is Lipschitz continuous and differentiable on $\rr$ with $G(\xi_i)=\xi_i$ for $i=1, \ldots, k$
		\item the derivative $G'$ of $G$ is Lipschitz continuous on $\rr$
		\item the derivative $G'$ of $G$ has a bounded Lebesgue density $G'' \colon \rr \to \rr$ that is piecewise Lipschitz with discontinuity points given by $\xi_1 < \ldots < \xi_k$ such that
		$$ \tilde{a} = (G' \cdot a + \frac{1}{2} G'' \cdot b^2) \circ G^{-1} \,\, \textnormal{and } \tilde{b} = (G' \cdot b) \circ G^{-1}$$
		are Lipschitz continuous.
	\end{itemize}
	Define for all $x \in \rr$ 
	\begin{align*}
		f(x) & := \frac{d}{dx} G' \big( G^{-1} (x) \big) = G'' \big( G^{-1} (x) \big) \cdot \frac{1}{G' \big( G^{-1} (x) \big)} \\
		& = h(x) \cdot j(x),
	\end{align*}
	with
	$$h(x) =  G'' \big( G^{-1} (x) \big) \quad \text{ and } \quad j(x) = \frac{1}{G' \big( G^{-1} (x) \big)}. $$
	By the assumptions listed above $f$ is bounded. Moreover, the function $j$ is bounded and differentiable (on $U$) with bounded derivative, thus the function $j$ is Lipschitz continuous.
	Since $h$ is bounded and Lipschitz continuous as well as a composition of Lipschitz continuous functions, the function $f$ is Lipschitz continuous on $U$ as a product of bounded and Lipschitz continuous functions. Similarly the function $g$ with
	\begin{align*}
	g(x) & := \frac{d}{dx} c \big( G^{-1} (x) \big) = c' \big( G^{-1} (x) \big) \cdot \frac{1}{G' \big( G^{-1} (x) \big)} , \quad x \in \rr,
	\end{align*}
	is bounded and Lipschitz continuous on $U$. Now, for all $x \in \rr$ let
	$$ \tilde{c} (x)  = c \big( G^{-1} (x) \big) \cdot  G' \big( G^{-1} (x) \big).$$
	Then $\tilde{c}$ is differentiable in $U$ and for $x \in U$ we have
	\begin{align*}
		\tilde{c} '(x) & =\frac{d}{dx} \tilde{c} (x) = c \big( G^{-1} (x) \big) \cdot f(x) + g(x) \cdot G' \big( G^{-1} (x) \big) \\
		& = c \big( G^{-1} (x) \big) \cdot G'' \big( G^{-1} (x) \big) \cdot \frac{1}{G' \big( G^{-1} (x) \big)} +  c' \big( G^{-1} (x) \big) \cdot \frac{1}{G' \big( G^{-1} (x) \big)} \cdot G' \big( G^{-1} (x) \big) .
	\end{align*}
	Then, by the considerations above the function $\tilde{c} '$ is bounded and Lipschitz continuous on $U$. Now consider the extension $\tilde{c}' \colon \rr \to \rr$ of $\tilde{c}'$, which we define by setting
	$$ \tilde{c} '(\xi_i) =  c \big( G^{-1} (\xi_i) \big) \cdot G'' \big( G^{-1} (\xi_i) \big) \cdot \frac{1}{G' \big( G^{-1} (\xi_i) \big)} + c' \big( G^{-1} (\xi_i) \big)$$
	for all $i \in \{1, \ldots, k\}$. By construction and by (A3) we have for all $i \in \{1, \ldots, k\}$
	\begin{align*}
		\tilde{c}' (\xi_i +) & = c (\xi_i + ) \cdot G'' (\xi_i+) \frac{1}{G'(\xi_i+)} + c' (\xi_i +) \\
		& = c' (\xi_i ) = c' (\xi_i -) = \tilde{c}' (\xi_i -).
	\end{align*}
	Thus, the function $\tilde{c}'$ is continuous and piecewise Lipschitz, hence Lipschitz continuous by \cite[Lemma 2.6]{LS}. Moreover, $\tilde{c}'$ is bounded, as $G''$ has compact support. We conclude that the function $\tilde{c}$ defined above admits a bounded Lebesgue-density that is Lipschitz continuous.
	
	Now consider the transformed process $(G(X_t))_{t \in [0,T]}$. By It\^{o}'s formula, Theorem \ref{genIto} below,
	$$dZ_t = \tilde{a} (Z_t) dt + \tilde{b} (Z_t) dW_t + \tilde{c} (Z_t) dB_t^H$$
	for $Z_t = G(X_t), t \in [0,T]$. By Proposition \ref{th2} the solution to this SDE is unique. This completes the proof.
\end{proof}

\section{It\^{o}'s formula for mixed SDEs}
In the following assume that $a,b,c \colon \rr \to \rr$ are Lipschitz, $c$ is differentiable with bounded and Lipschitz continuous derivative $c'$ so that, by \cite[Theorem 3.1]{Mish}, the solution to 
\begin{align*}
X_t & = X_0+ \int_0^t a(X_s) ds + \int_0^t b(X_s) dW_s + \int_0^t c(X_s) dB_t^H , \quad t \in [0,T], \\
X_0 & = \xi \in \rr,
\end{align*}
exists and is unique.
\begin{theorem} \label{Ito}
	Let $f \colon \rr \to \rr$ be twice continuously differentiable. Then, almost surely,
	\begin{align} \label{Itoformula}
		\begin{split}
			f (X_t) & = f(X_0) + \int_0^t f'(X_s) b(X_s) dW_s + \int_0^t f'(X_s) c(X_s) dB^H_s \\
			& \quad +  \int_0^t \Big( \frac{1}{2}b^2(X_s) f''(X_s) +a(X_s) f'(X_s) \Big) ds, \quad t \in [0, T] .
		\end{split}
	\end{align}
\end{theorem}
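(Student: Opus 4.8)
The plan is to prove \eqref{Itoformula} by a direct discretisation argument: expand $f(X_t)-f(X_0)$ as a telescoping sum over a sequence of partitions whose mesh tends to zero, Taylor-expand each increment to second order, and pass to the limit. Write the solution as $X=\xi+A+M+Y$, where $A_t=\int_0^t a(X_s)\,ds$ is absolutely continuous, $M_t=\int_0^t b(X_s)\,dW_s$ is a continuous $L^2$-martingale, and $Y_t=\int_0^t c(X_s)\,dB^H_s$ is the pathwise fractional integral. The decisive structural fact is that $X$ is almost surely $\lambda$-H\"older continuous for every $\lambda\in(0,\tfrac12)$: the increments coming from $A$ and $M$ are controlled by standard estimates, while those coming from $Y$ are even more regular, being $\beta$-H\"older for some $\beta<H$ with $2\beta>1$. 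Since every trajectory stays in a compact set on $[0,t]$, I may use that $f'$ and $f''$ are bounded and Lipschitz on the range of $X$; to make the $L^2$-arguments for the martingale part rigorous I would localise along the stopping times $\tau_N=\inf\{s\colon |X_s|>N\}$ and remove the localisation at the end.

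Fix a partition $0=t_0<\dots<t_n=t$ with mesh $|\pi|$ and write $\Delta X_i=X_{t_{i+1}}-X_{t_i}$, similarly $\Delta A_i,\Delta M_i,\Delta Y_i$. Taylor's theorem gives $f(X_{t_{i+1}})-f(X_{t_i})=f'(X_{t_i})\,\Delta X_i+\tfrac12 f''(\eta_i)\,(\Delta X_i)^2$ for some $\eta_i$ between $X_{t_i}$ and $X_{t_{i+1}}$. For the first-order sum I would split $\Delta X_i=\Delta A_i+\Delta M_i+\Delta Y_i$: then $\sum_i f'(X_{t_i})\,\Delta A_i\to\int_0^t f'(X_s)a(X_s)\,ds$ by dominated convergence, $\sum_i f'(X_{t_i})\,\Delta M_i\to\int_0^t f'(X_s)b(X_s)\,dW_s$ as a left-point Riemann approximation of an It\^o integral, and $\sum_i f'(X_{t_i})\,\Delta Y_i\to\int_0^t f'(X_s)c(X_s)\,dB^H_s$. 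For the last convergence the error equals $\sum_i\int_{t_i}^{t_{i+1}}\bigl(f'(X_{t_i})-f'(X_s)\bigr)c(X_s)\,dB^H_s$, and I would estimate each summand with the fractional-integral bound \eqref{neu1}: since $|f'(X_{t_i})-f'(X_s)|\le C\,(t_{i+1}-t_i)^\lambda$ on $[t_i,t_{i+1}]$, the $i$-th term is of order $(t_{i+1}-t_i)^{\lambda+\beta}$, and summing over $i$ with $\lambda+\beta>1$ forces the error to vanish as $|\pi|\to0$.

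For the second-order sum I would expand $(\Delta X_i)^2$ into the six products of $\Delta A_i,\Delta M_i,\Delta Y_i$ and show that only the martingale square survives. Indeed $\sum_i(\Delta A_i)^2\le C\,|\pi|\,t\to0$ because $A$ is Lipschitz in time, and $\sum_i(\Delta Y_i)^2\le \|Y\|_\beta^2\,|\pi|^{2\beta-1}\,t\to0$ because $Y\in C^\beta$ with $2\beta>1$; this is precisely the place where $H>\tfrac12$ enters and explains why no fractional It\^o correction appears in \eqref{Itoformula}. The three mixed sums vanish by Cauchy--Schwarz, each being bounded by the product of two of the foregoing partial sums, at least one of which tends to zero. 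Finally $\sum_i f''(\eta_i)(\Delta M_i)^2\to\int_0^t f''(X_s)b^2(X_s)\,ds$; I would first replace $f''(\eta_i)$ by $f''(X_{t_i})$ using uniform continuity of $s\mapsto f''(X_s)$, and then invoke convergence of the discretised quadratic variation $\sum_i g(X_{t_i})(\Delta M_i)^2\to\int_0^t g(X_s)\,d\langle M\rangle_s$ with $d\langle M\rangle_s=b^2(X_s)\,ds$.

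The main obstacle is the simultaneous handling of two integrals of genuinely different nature: the It\^o part demands $L^2$/probabilistic control, while the Young part is purely pathwise. The delicate points are therefore the vanishing of the cross term $\sum_i\Delta M_i\,\Delta Y_i$, where one must marry the bound $\mathbb{E}\sum_i(\Delta M_i)^2=\mathbb{E}\int_0^t b^2(X_s)\,ds$ with the pathwise estimate $\sum_i(\Delta Y_i)^2=o(1)$ through Cauchy--Schwarz, yielding convergence to zero in probability, and the Riemann-sum convergence of the fractional integral in the first-order term, which hinges on the quantitative estimate \eqref{neu1}. Everything else is the classical It\^o-formula machinery restricted to the Brownian component $M$.
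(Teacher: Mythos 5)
Your proposal is correct and follows essentially the same route as the paper: a telescoping sum over partitions, second-order Taylor expansion, and term-by-term passage to the limit, treating the Brownian part by It\^{o} isometry/quadratic-variation arguments and the fractional part by pathwise Young-type estimates exploiting $H>\tfrac12$. The only (minor) variations are that you control the Riemann sums for the $dB^H$ integral via the quantitative bound \eqref{neu1} where the paper cites convergence of Riemann--Stieltjes sums for Young integrals, and you kill the martingale--fractional cross term by Cauchy--Schwarz in probability where the paper uses the a.s.\ H\"older regularity of $\int_0^\cdot b(X_s)\,dW_s$; both are sound.
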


\begin{proof}
	By the usual localization argument, see the proof of \cite[Theorem 3.3]{KS}, we may assume that $f$ has compact support and that $f,f',f''$ are bounded. Fix $t \in (0, T]$ and a sequence $(\Pi^n = \{ 0=t_0^n < t^n_1< \ldots< t^n_m = t\})_{n \in \N}$, $m \in \mathbb{N}$, of partitions of $[0,t]$ with $\max_{1 \leq k \leq m} |t_k^n - t_{k-1}^n| \to 0$, $n \to \infty$. For notational simplicity we will suppress the index $n$ and simply write $\Pi = \{ 0=t_0 < t_1< \ldots< t_m = t\}$. By Taylor expansion
	\begin{align*}
		f(X_t) - f(X_0) & = \sum_{k=1}^{m} \Big( f(X_{t_k}) - f(X_{t_{k-1}}) \Big) \\
		& =  \sum_{k=1}^{m} f'(X_{t_{k-1}}) (X_{t_k} - X_{t_{k-1}}) + \frac{1}{2}\sum_{k=1}^{m} f''(\eta_{{k}})  (X_{t_k} - X_{t_{k-1}})^2
	\end{align*}
	with $\eta_{{k}}:= X_{t_{k-1}} + \theta_k (X_{t_k} - X_{t_{k-1}})$ for some random variable $\theta_k = \theta_k (\omega) \in [0,1], \omega \in \Omega$. We write
	\begin{equation*}
		f(X_t) - f(X_0) = J_0 + J_1 + J_2 + \frac{1}{2} J_3
	\end{equation*}
	with
	\begin{align*}
		J_0 &= \sum_{k=1}^m f'(X_{t_{k-1}}) \int_{t_{k-1}}^{t_k} a(X_s) ds, \\
		J_1 &= \sum_{k=1}^m f'(X_{t_{k-1}}) \int_{t_{k-1}}^{t_k} c(X_s) dB^H_s, \\
		J_2 &= \sum_{k=1}^m f'(X_{t_{k-1}}) \int_{t_{k-1}}^{t_k} b(X_s) dW_s, \\
		J_3 & = \sum_{k=1}^{m} f''(\eta_{{k}})  (X_{t_k} - X_{t_{k-1}})^2 .
	\end{align*}
	Observe that $J_0$ converges to the Lebesgue-Stieltjes integral $\int_0^t f'(X_s) a(X_s) ds$, as $n \to \infty$, almost surely. Now we turn to the term $J_1$. By \cite[Theorem 3.1]{Mish} for all $\alpha \in (1-H, \frac{1}{2})$ we have
	$$ X = (X_t)_{t \in [0,T]} \in W_0^{\alpha, \infty}$$
	almost surely. Therefore $X \in C^{\frac{1}{2}-}$ and we conclude that, by the assumptions made,
	$$ f'(X) c(X) \in C^{\frac{1}{2}-}$$
	almost surely. Thus, by \cite[Theorem 2.1.7]{Mishbook} the Riemann-Stieltjes integral
	$$ \int_0^t f'(X_s) c(X_s) dB_s^H$$
	exists and equals the limit $\lim_{n \to \infty} J_1$, almost surely. Now we consider the term $J_2$. Define $Y_s := f'(X_s)$, $s \in [0,T]$, which we are going to approximate by
	$$ Y_s^\Pi = f'(X_0) \mathbf{1}_{\{0\}} (s) + \sum_{k=1}^m f'(X_{t_{k-1}}) \mathbf{1}_{(t_{k-1}, t_k]} (s) , \quad s\in [0,t] .$$
	By It\^{o}'s isometry we have
	\begin{align*}
		\mathbb{E} \Big[ \Big( \int_0^t b(X_s) (Y_s^\Pi - Y_s) dW_s \Big) ^2 \Big] = \mathbb{E} \Big[  \int_0^t b^2(X_s) (Y_s^\Pi - Y_s)^2 ds  \Big] \to 0
	\end{align*}
	as $n \to \infty$, by the dominated convergence theorem. We obtain that
	$$ J_2 \to \int_0^t b(X_s) f'(X_s) dW_s, \quad n \to \infty,$$
	in $L^2$, i.e.
	$$ \mathbb{E} \Big[ \Big( J_2 - \int_0^t b(X_s) f'(X_s) dW_s \Big) ^2 \Big] \to 0, \quad n \to \infty.$$
	It remains to consider the expression $J_3$. We begin by writing
	$$ J_3 = J_4 + J_5 + J_6 + J_7 + J_8 + J_9$$
	with
	\begin{align*}
		J_4 &= \sum_{k=1}^m f''(\eta_{{k}})  \Big(\int_{t_{k-1}}^{t_k} c(X_s) dB^H_s\Big) ^2, \\
		J_5 &= \sum_{k=1}^m f''({\eta_{k}}) \Big( \int_{t_{k-1}}^{t_k} a(X_s) ds\Big) ^2, \\
		J_6 &= \sum_{k=1}^m f''({\eta_{k}}) \Big( \int_{t_{k-1}}^{t_k} b(X_s) dW_s\Big) ^2, \\
		J_7 & = 2\sum_{k=1}^{m} f''(\eta_{{k}}) \Big( \int_{t_{k-1}}^{t_k} a(X_s) ds\Big) \Big(\int_{t_{k-1}}^{t_k} c(X_s) dB^H_s\Big) ,\\
		J_8 & = 2\sum_{k=1}^{m} f''(\eta_{{k}}) \Big( \int_{t_{k-1}}^{t_k} a(X_s) ds\Big) \Big(\int_{t_{k-1}}^{t_k} b(X_s) dW_s\Big) ,\\
		J_9 & = 2\sum_{k=1}^{m} f''(\eta_{{k}}) \Big(\int_{t_{k-1}}^{t_k} c(X_s) dB^H_s\Big) \Big(\int_{t_{k-1}}^{t_k} b(X_s) dW_s\Big) .
	\end{align*}
	We first estimate $|J_4|$. In order to do this first define $W_T^{1-\alpha, \infty}$ for $\alpha \in (0, \frac{1}{2})$ as the space of measurable functions $g \colon [0,T] \to \rr$ such that
	$$ \| g \| _{1-\alpha, \infty, T} := \sup_{0 < s < t < T} \Big( \frac{|g(t) - g(s)|}{(t-s)^{1-\alpha }} + \int_s^t \frac{|g(y) - g(s)|}{(y-s)^{2-\alpha }} dy \Big) < \infty .$$
	We have the relation
	$$ C^{1-\alpha + \varepsilon} \subset W_T^{1-\alpha, \infty} \subset C^{1-\alpha}$$
	for every $\varepsilon \in (0, \infty)$. Recall that
	$$ (X_t)_{t \in [0,T]} \in W_0^{\alpha, \infty}$$
	almost surely. Since $c$ is assumed to be Lipschitz, we have that
	$$ \big( c(X_t)\big)_{t \in [0,T]} \in W_0^{\alpha, \infty}$$
	almost surely. Moreover, by the remarks from above
	$$ (B^H_t)_{t \in [0,T]} \in W_T^{H-\varepsilon, \infty} \quad \forall \varepsilon \in (0, \infty)$$
	almost surely. Consequently, by \cite[Proposition 4.2]{Nual2} we have
	$$\Big(  \int_0^t c(X_s) dB^H_s \Big)_{t \in [0,T]} \in C^{H - \varepsilon} \quad \forall \varepsilon \in (0, \infty)$$
	almost surely. Let $\gamma \in (\frac12 , H-\varepsilon)$. From this, together with the boundedness of $f''$, for some constant $K \in (0, \infty)$ we obtain
	$$ | J_4| \leq K \sum_{k=1}^{m} \Big( {t_k} - {t_{k-1}} \Big) ^{2\gamma} \to 0, \quad n \to \infty,$$
	almost surely. We continue by estimating $|J_5| + |J_7|$. Recall that the mapping
	$$ [0,T] \ni t \mapsto \int_0^t a(X_s) ds$$
	is continuous, of bounded variation and that the mapping
	$$ [0,T] \ni t \mapsto \int_0^t c(X_s) dB^H_s$$
	is continuous, almost surely. Thus
	\begin{align*}
		|J_5| + |J_7| & \leq K \Big( \max_{1 \leq k \leq m} |\int_{t_{k-1}}^{t_k} c(X_s) dB^H_s| +  \max_{1 \leq k \leq m}|\int_{t_{k-1}}^{t_k} a(X_s) ds| \Big) \\
		& \to 0, \quad n \to \infty,
	\end{align*}
	almost surely. The same argument shows that $|J_8|$ converges to 0, as $n \to \infty$, almost surely. Now we estimate $|J_9|$. Recall that
	$$\Big(  \int_0^t c(X_s) dB^H_s \Big)_{t \in [0,T]} \in C^{H - \varepsilon} \quad \forall \varepsilon \in (0, \infty)$$
	almost surely. We now combine this with the fact that
	$$\Big(  \int_0^t b(X_s) dW_s \Big)_{t \in [0,T]} \in C^{\frac{1}{2}-}$$
	almost surely. Indeed, choose $\alpha \in (0,H)$ and $\beta \in (0, \frac{1}{2})$ with $\alpha + \beta >1$. Employing the H\"older continuity we get for some constants
	\begin{align*}
		|J_9| & \leq K_1 \sum_{k=1}^m |t_k - t_{k-1}|^\alpha  |t_k - t_{k-1}|^\beta \\
		& \leq \max_{1 \leq k \leq m} |t_k - t_{k-1}|^{\alpha + \beta -1} K_1 \sum_{k=1}^m (t_k - t_{k-1}) \Big( \frac{t_k - t_{k-1}}{\max_{1 \leq k \leq m} |t_k - t_{k-1}|} \Big) ^{\alpha + \beta -1} \\
		& \leq \max_{1 \leq k \leq m} |t_k - t_{k-1}|^{\alpha + \beta -1} K_2 \sum_{k=1}^m (t_k - t_{k-1}) \\
		& \leq K_3 \max_{1 \leq k \leq m} |t_k - t_{k-1}|^{\alpha + \beta -1} \to 0, \quad n \to \infty,
	\end{align*}
	almost surely, since $\alpha + \beta >1$. It remains to consider
	$$J_6 = \sum_{k=1}^m f''(\eta_{k}) \Big( \int_{t_{k-1}}^{t_k} b(X_s) dW_s \Big) ^2.$$
	Define
	$$J'_6 = \sum_{k=1}^m f''(X_{t_{k-1}}) \Big( \int_{t_{k-1}}^{t_k} b(X_s) dW_s \Big) ^2.$$
	Then
	$$| J_6 - J'_6| \leq \max_{1 \leq k \leq m} |f''(\eta_{k}) - f''(X_{t_{k-1}})| \sum_{k=1}^m  \Big( \int_{t_{k-1}}^{t_k} b(X_s) dW_s \Big) ^2 .$$
	Note that $(b(X_s))_{s \in [0,T]}$ is adapted and continuous, hence progressively measurable. Moreover, since
	$$\mathbb{E} \Big[ \int_0^t b^2(X_s) ds \Big] < \infty$$
	by assumption, we get that the process
	$$\Big(  \int_0^t b(X_s) dW_s \Big)_{t \in [0,T]}$$
	is a martingale with respect to the underlying filtration. Thus, by \cite[Lemma 1.5.9]{KS}
	$$ \mathbb{E} \Big[ \sum_{k=1}^m \Big( \int_{t_{k-1}}^{t_k} b(X_s) dW_s \Big)^2 \Big] \leq K$$
	for some constant, and from the Cauchy-Schwarz-inequality we further get that
	\begin{align*}
		\mathbb{E} [| J_6 - J'_6|] \leq \sqrt{\mathbb{E} \Big[ \Big( \max_{1 \leq k \leq m} |f''(\eta_{k}) - f''(X_{t_{k-1}}) | \Big) ^2 \Big]} \to 0, \quad n \to \infty,
	\end{align*}
	by the dominated convergence theorem and the fact that $f'',X$ are continuous. Now define
	$$J_7' := \sum_{k=1}^m f''(X_{t_{k-1}})  \int_{t_{k-1}}^{t_k} b^2(X_s) ds .$$
	By the marintagle-property, see also \cite[p. 32]{KS}, we have
	\begin{align*}
		\mathbb{E} [| J'_6 - J'_7|^2] & = \mathbb{E} \Big[ | \sum_{k=1}^m f''(X_{t_{k-1}}) \Big( \big( \int_{t_{k-1}}^{t_k} b(X_s) dW_s \big) ^2 - \int_{t_{k-1}}^{t_k} b^2(X_s) ds  \Big) |^2 \Big] \\
		& = \mathbb{E} \Big[  \sum_{k=1}^m \big(f''(X_{t_{k-1}})\big) ^2 \Big( \big( \int_{t_{k-1}}^{t_k} b(X_s) dW_s \big) ^2 -  \int_{t_{k-1}}^{t_k} b^2(X_s) ds  \Big)^2 \Big] \\
		& \leq 2K \mathbb{E} \Big[  \sum_{k=1}^m \Big( \int_{t_{k-1}}^{t_k} b(X_s) dW_s \Big) ^4 +  \sum_{k=1}^m \Big(\int_{t_{k-1}}^{t_k} b^2(X_s) ds  \Big)^2 \Big] \\
		& \leq 2K \mathbb{E} \Big[  \sum_{k=1}^m \Big( \int_{t_{k-1}}^{t_k} b(X_s) dW_s \Big) ^4 +  \Big(\int_{0}^{t} b^2(X_s) ds  \Big) \max_{1 \leq k \leq m} \Big(\int^{t_k}_{t_{k-1}} b^2(X_s) ds  \Big)  \Big] \\
		& \to 0, \quad n \to \infty,
	\end{align*}
	by \cite[Lemma 1.5.10]{KS} and the dominated convergence theorem. Overall, we conclude that
	$$ J_3 \to \int_0^t f''(X_s) b^2(X_s) ds, \quad n \to \infty,$$
	in $L^1$. All these findings imply the result by standard arguments.
\end{proof}

\section{A generalized It\^{o} formula for mixed SDEs}

The goal of this section is a novel proof of the following new variant of It\^{o}'s formula differing from the one presented in the previous section.

\begin{theorem} \label{genIto}
	Let $\Theta = \{ \xi_1, \ldots, \xi_k\}$ be as in the proof of Theorem \ref{th3}. Let $f \colon \rr \to \rr$ be a continuously differentiable function such that for all $x \in \rr \setminus \Theta$ the second derivative $f''(x)$ exists and the function $f'' \colon \rr \setminus \Theta \to \rr$ is continuous and bounded. For definiteness extend $f''$ to $\rr$ in a way such that $f'' \colon \rr \to \rr$ is measurable. Moreover, let $X$ be as in Theorem \ref{Ito}, where we assume that $b(\xi_i) \neq 0$ for all $i=1, \ldots, k$. Then, almost surely,
	\begin{align*}
	\begin{split}
	f (X_t) & = f(X_0) + \int_0^t f'(X_s) b(X_s) dW_s + \int_0^t f'(X_s) c(X_s) dB^H_s \\
	& \quad +  \int_0^t \Big( \frac{1}{2}b^2(X_s) f''(X_s) +a(X_s) f'(X_s) \Big) ds, \quad t \in [0, T] .
	\end{split}
	\end{align*}
\end{theorem}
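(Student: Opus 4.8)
The plan is to obtain the formula for the irregular $f$ by mollifying it into a sequence of genuinely $C^2$ functions, applying the classical formula of Theorem \ref{Ito} to each, and then passing to the limit; all but one of the resulting limits are routine, and the single delicate passage — the term carrying the second derivative — will be controlled by an occupation-time argument resting on the absolute continuity of the law of $X_s$.

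First I would record that, since $f$ is continuously differentiable and $f''$ is bounded on $\rr \setminus \Theta$ with $\Theta$ finite, the derivative $f'$ is globally Lipschitz with constant at most $\|f''\|_\infty$. Let $(\rho_n)_n$ be a standard sequence of mollifiers and set $f_n := f * \rho_n$. Then $f_n \in C^\infty$, and after the localization to compact support already used in the proof of Theorem \ref{Ito} one has $f_n \to f$ and $f_n' \to f'$ uniformly, while $f_n'' = f'' * \rho_n$ satisfies $\|f_n''\|_\infty \le \|f''\|_\infty$ and $f_n''(x) \to f''(x)$ at every continuity point of $f''$, that is at every $x \in \rr \setminus \Theta$. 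In particular the Lipschitz constants of the $f_n'$ are uniformly bounded. I then apply Theorem \ref{Ito} to each $f_n$, obtaining \eqref{Itoformula} with $f$ replaced by $f_n$, and it remains to pass to the limit in each term.

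The left-hand side and the value $f_n(X_0)$ converge almost surely by uniform convergence. For the It\^o integral, It\^o's isometry together with $\mathbb{E}\int_0^t b^2(X_s)|f_n'(X_s) - f'(X_s)|^2\,ds \to 0$ (dominated convergence, using $f_n' \to f'$ uniformly and $\mathbb{E}\int_0^t b^2(X_s)\,ds < \infty$) yields $L^2$-convergence to $\int_0^t f'(X_s) b(X_s)\,dW_s$. For the pathwise fractional integral I would use that the maps $x \mapsto (f_n' - f')(x)\,c(x)$ are Lipschitz with uniformly bounded constants and tend to $0$ uniformly, so that, since $X \in C^{\frac12-}$, the estimate \eqref{neu1} applied to the difference drives $\int_0^t f_n'(X_s) c(X_s)\,dB_s^H$ to $\int_0^t f'(X_s) c(X_s)\,dB_s^H$. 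The drift contribution $\int_0^t a(X_s) f_n'(X_s)\,ds$ converges almost surely by dominated convergence.

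The crucial and only genuinely new point is the term $\tfrac12 \int_0^t b^2(X_s) f_n''(X_s)\,ds$. Here $f_n''(X_s) \to f''(X_s)$ holds only on the event $\{X_s \notin \Theta\}$, so, since the integrand is dominated by the integrable $\tfrac12\|f''\|_\infty b^2(X_s)$, dominated convergence delivers the desired limit $\tfrac12\int_0^t b^2(X_s) f''(X_s)\,ds$ precisely when the occupation time of $\Theta$ vanishes, i.e.
$$ \int_0^t \mathbf{1}_{\{X_s \in \Theta\}}\,ds = 0 \quad \text{almost surely}. $$
(The same fact also shows that this limit, and hence the formula, is independent of the chosen measurable extension of $f''$ to $\Theta$.) To establish the display I would take expectations and apply Fubini, reducing the claim to $\int_0^t P(X_s \in \Theta)\,ds = 0$, which holds as soon as $X_s$ admits a density for Lebesgue-almost every $s \in (0,t]$, $\Theta$ being finite. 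This is exactly the absolute-continuity statement developed in Section 4, where the hypothesis $b(\xi_i) \neq 0$ enters to guarantee non-degeneracy of the diffusion at the discontinuity points and thus existence of the density. I expect this absolute-continuity input — rather than the mollification or the convergence of the remaining terms — to be the main obstacle, the role of the assumption $b(\xi_i) \neq 0$ being to make it available. Collecting all the limits yields the stated formula.
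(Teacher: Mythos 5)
Your proposal follows the same route as the paper's proof: mollify $f$, apply Theorem \ref{Ito} to the smooth approximations $f_n$, and pass to the limit term by term, with the second-derivative term resting on the absolute continuity of the law of $X_s$ on $\{b \neq 0\}$ (Lemma \ref{d}) — which is exactly where the hypothesis $b(\xi_i) \neq 0$ enters. Your treatment of the It\^{o} integral, the drift term, and the second-derivative term is correct; in fact your occupation-time formulation ($\int_0^t \mathbf{1}_{\{X_s \in \Theta\}} ds = 0$ a.s.\ by Fubini and Lemma \ref{d}, then dominated convergence along the whole integrand) is somewhat cleaner than the paper's splitting into $\{|X_s| > \frac1n\}$ and $\{|X_s| \le \frac1n\}$ with Fatou's lemma, and it makes transparent why the formula does not depend on the chosen extension of $f''$ to $\Theta$.

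There is, however, one genuine gap: the convergence of the fractional integrals. Set $g_n := (f_n' - f')\,c$. Applying \eqref{neu1} to $g_n$ bounds $\big|\int_0^t g_n(X_r)\, dB_r^H\big|$ by a first term controlled by $\|g_n\|_\infty$, which does vanish, \emph{plus} a second term proportional to $\|g_n\|_1 \, \|X\|_\lambda$, i.e.\ to the Lipschitz norm of $g_n$ — and this term does not vanish under your hypotheses. Indeed $\operatorname{Lip}(f_n' - f') = \|f_n'' - f''\|_\infty$, and since $f_n''$ is continuous while $f''$ has a genuine jump at the points of $\Theta$, this sup-norm stays bounded below by roughly half the jump size for every $n$; moreover $c$ need not vanish on $\Theta$ under the hypotheses of Theorem \ref{genIto}, so $\operatorname{Lip}(g_n)$ is merely uniformly bounded, not small. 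Feeding "uniformly bounded Lipschitz constants and uniform convergence to zero" into \eqref{neu1} therefore leaves a right-hand side that is bounded but does not tend to zero. The conclusion is still true, but it needs an extra idea. One fix is interpolation: from $\|g_n\|_\infty \to 0$ and $\sup_n \operatorname{Lip}(g_n) < \infty$ one gets $\sup_{x \neq y} |g_n(x) - g_n(y)|/|x-y|^\theta \le C \|g_n\|_\infty^{1-\theta} \to 0$ for every $\theta \in (0,1)$; hence $g_n(X) \to 0$ in $C^{\lambda\theta}$, and choosing $\lambda$ close to $\frac12$, $\beta$ close to $H$ and $\theta$ close to $1$ so that $\lambda\theta + \beta > 1$, a H\"older (Young-type) version of \eqref{neu1} closes the argument. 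The paper instead argues through the definition of the fractional integral: since $f'(X)c(X) \in C^{\frac12 -}$, one writes $\int_0^t f_n'(X_s) c(X_s)\, dB_s^H = (-1)^\alpha \int_0^t D_{0+}^\alpha\big(f_n'(X) c(X)\big)(s) \, D_{t-}^{1-\alpha} B_{t-}^H(s)\, ds$ and checks that $D_{0+}^\alpha\big(g_n(X)\big) \to 0$ in $L_1$ by dominated convergence, the increments $|g_n(X_x) - g_n(X_y)|$ tending to zero pointwise and being dominated by $C \|X\|_\lambda (x-y)^\lambda$ with $\alpha < \lambda$. Either repair is routine, but as written your step fails, and it is the only point where your argument falls short of the paper's.
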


\begin{remark} \label{rem}
	In proving Theorem \ref{genIto} we shall often make use of the localization argument, as executed in the proof of Theorem \ref{Ito}. Let $X$ be as in Theorem \ref{genIto} and denote by $\| \cdot \|_t$, $t \in [0,T]$, a norm such that $\| X\|_t$ is almost surely finite. Choose a sequence of non-decreasing stopping times $(T_n)_{n \in \N}$ with the property that $\| X\|_t \leq n$ for all $t \in [0, T_n]$, $n \in \N$. Then it suffices to establish Theorem \ref{genIto} for the stopped process $X_t^{(n)} := X_{t \wedge T_n}$, $t \in [0,T]$, $n \in \N$. Therefore, without loss of generality in our proofs we will often assume that $\sup_{t \in [0,T]} \| X\|_t$ is bounded by some constant $K \in (0, \infty)$. A frequent choice for the norm will be $\| X\|_t = |X_t|$ or $\| X\|_t = \frac{|X_t - X_s|}{(t-s)^\lambda}$, $0 \leq s <t$, for $\lambda \in (0,1]$. 
\end{remark}

As mentioned in the introduction we present a proof of this variant of It\^{o}'s formula for mixed SDEs which combines ideas in \cite{Fan, LRS}. The first essential is to establish the existence of a density of the law of $X_t$ for every $t \in (0,T]$, where solely weak assumptions on the diffusion coefficient $b$ are imposed. In particular, we do not require non-degeneracy conditions, nor do we require any assumptions on the fractional coefficient $c$. 

We proceed in studying the existence of a density, which shortly after enables us to provide a proof of our main result in this section. We introduce some notation used throughout this section. For $h \in \rr$ and $m \in \N$ we define $\Delta_h$ to be the difference operator with respect to $h$ and $\Delta_h^m$ to be the difference operator of order $m$ which are given by
$$ \Delta_h f(x) = f(x+h) - f(x) \quad \text{ and } \quad \Delta_h^m f(x) = \Delta_h (\Delta^{m-1}_h f) (x) ,$$
respectively, for every function $f \colon \rr \to \rr$ and $x \in \rr$. Moreover, for $\gamma \in (0,m)$ we set $\mathcal{C}_b^\gamma$ to be the closure of bounded smooth functions with respect to the norm
$$ \| f\|_{\mathcal{C}_b^\gamma} := \|f \|_\infty + \sup_{ |h| \leq 1} \frac{\|\Delta_h^m f(x) \|_ \infty}{|h|^\gamma},$$
where $\| \cdot \|_\infty$ denotes the sup norm. Our second main result of this section reads as follows.

\begin{lemma} \label{d}
	Let $X$ be as in Theorem \ref{Ito}. For all $t \in (0,T]$ the law of $X_t$ admits a density with respect to the Lebesgue measure on the set $D_b = \{ x \in \rr: b(x) \neq 0\}$. In particular, it holds $P(X_t =x) = 0$ for all $x \in D_b$.
\end{lemma}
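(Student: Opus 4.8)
The plan is to establish absolute continuity by Malliavin calculus with respect to the Wiener process $W$ \emph{alone}, combined with the local (Bouleau--Hirsch type) criterion for the existence of a density, carried out conditionally on the fractional Brownian motion $B^H$. Since $W$ and $B^H$ are independent, I would fix a realization of $B^H$ and regard \eqref{sde1} as an equation driven by $W$ with a pathwise forcing coming from the Young integral $\int_0^{\cdot} c(X_s)\,dB^H_s$; all Malliavin operations are then taken on the Wiener space of $W$, and the final statement is recovered by integrating the conditional conclusions against the law of $B^H$.

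First I would show that $X_t \in \mathbb{D}^{1,2}$ (Malliavin differentiable with respect to $W$) for every $t$, and that the derivative $r \mapsto D_r X_t$, $r \in [0,t]$, admits a continuous version solving the linear equation
\begin{align*}
D_r X_t &= b(X_r) + \int_r^t a'(X_s) D_r X_s\,ds + \int_r^t b'(X_s) D_r X_s\,dW_s \\
&\quad + \int_r^t c'(X_s) D_r X_s\,dB^H_s ,
\end{align*}
interpreting $a',b'$ via the generalized chain rule for Lipschitz functions (they exist a.e., while $c'$ is genuine by the hypotheses of Theorem \ref{Ito}). Solving this linear equation gives the factorization $D_r X_t = b(X_r)\,J_{r,t}$, where $J_{r,t}$ is the first-variation (Dol\'eans--Dade exponential) process of the equation; in particular $J_{r,t} > 0$ almost surely and $D_r X_t \to b(X_t)$ as $r \uparrow t$. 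The Malliavin covariance is then
\begin{align*}
\gamma_t = \int_0^t (D_r X_t)^2\,dr = \int_0^t b(X_r)^2 J_{r,t}^2\,dr .
\end{align*}

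Next comes the localization, which is the whole point of obtaining a density only on $D_b$. On the event $\{X_t \in D_b\} = \{b(X_t) \neq 0\}$, continuity of $r \mapsto X_r$ and of $b$ forces $b(X_r) \neq 0$ for $r$ in a left-neighborhood of $t$, whence $\gamma_t > 0$; thus $\{X_t \in D_b\} \subseteq \{\gamma_t > 0\}$ almost surely. The Bouleau--Hirsch criterion applied conditionally on $B^H$ then yields that, for a.e.\ realization of $B^H$, the conditional law of $X_t$ restricted to $\{\gamma_t > 0\}$ is absolutely continuous. Consequently, for any Borel set $A \subseteq D_b$ with Lebesgue measure zero,
\begin{align*}
P(X_t \in A) = P(X_t \in A,\ b(X_t)\neq 0) &\leq P(X_t \in A,\ \gamma_t > 0) \\
&= \mathbb{E}\big[ P(X_t \in A,\ \gamma_t > 0 \mid B^H) \big] = 0 .
\end{align*}
This shows that the law of $X_t$ is absolutely continuous on $D_b$, and taking $A = \{x\}$ gives $P(X_t = x) = 0$ for all $x \in D_b$.

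The main obstacle is the first step in the mixed setting: rigorously establishing Malliavin differentiability of the solution with respect to $W$ and the displayed linear equation for $D_r X_t$, since the Young integral term $\int c(X_s)\,dB^H_s$ must be shown to commute with the Malliavin derivative while $a,b$ are merely Lipschitz. I would handle this by a Picard approximation (or by differentiating the It\^o--Young solution map), using the independence of $W$ and $B^H$ to keep $B^H$ frozen, together with moment bounds for the pathwise integral as in \eqref{neu1}; the positivity and the continuity up to $r=t$ of the first-variation process are then standard. As an alternative that avoids the abstract machinery and instead exploits the spaces $\mathcal{C}_b^\gamma$ introduced above, one could bound the localized increments $\mathbb{E}\big[\Delta_h^m \phi(X_t)\,\psi(X_t)\big]$ by $C\|\phi\|_\infty |h|^\gamma$ for a cutoff $\psi$ supported in $D_b$, which places the localized law in a Besov space of positive smoothness and hence forces a density; this is the route suggested by combining the estimates of \cite{Fan} with \cite{LRS}.
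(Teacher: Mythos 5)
Your main route is genuinely different from the paper's, which uses no Malliavin calculus at all. The paper localizes with the Lipschitz cutoff $h_\delta$ of Lemma \ref{f2} (taken from \cite{Fan}), approximates $X_T$ in $L^1$ by the one-step Euler-type variable $Y(\varepsilon)=X_{T-\varepsilon}+b(X_{T-\varepsilon})(W_T-W_{T-\varepsilon})+c(X_{T-\varepsilon})(B^H_T-B^H_{T-\varepsilon})$, shows (Lemma \ref{f3}) that $Y(\varepsilon)$ is conditionally Gaussian given $\mathbb{F}_{T-\varepsilon}$ with variance at least $b^2(X_{T-\varepsilon})\varepsilon\geq\delta^2\varepsilon$ on the support of $h_\delta$, quantifies the error $\mathbb{E}[|X_T-Y(\varepsilon)|]$ in Lemma \ref{f4}, and feeds the resulting bound $|\mathbb{E}[h_\delta(X_T)\Delta_h^m\phi(X_T)]|\leq C\|\phi\|_{\mathcal{C}_b^\gamma}\big(|h|^\gamma\varepsilon^{\beta'}+\varepsilon^{(\beta'+\frac12)\gamma}+|h|^m\varepsilon^{-\frac{m}{2}}\big)$ into the Besov-type criterion of Lemma \ref{f1} (from \cite{romito}), with the choice $\varepsilon=|h|^{4/3}$, $m=4$, $\gamma=1$. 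Your closing ``alternative'' paragraph gestures at exactly this argument, but omits the two ingredients that make it work: the conditionally Gaussian one-step approximation and the quantitative error estimate.

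The Malliavin route you develop has a genuine gap at its core, which you yourself flag as ``the main obstacle'': the conditional Malliavin differentiability of $X_t$ with respect to $W$, the linear equation for $D_rX_t$, and the factorization $D_rX_t=b(X_r)J_{r,t}$ are asserted rather than proved, and no existing result covers them for mixed equations. Two concrete difficulties stand in the way. First, $a$ and $b$ are only Lipschitz, so writing $a'(X_s)$, $b'(X_s)$ in the derivative equation is circular: Rademacher's theorem gives differentiability only Lebesgue-a.e., and for $a'(X_s)$ to be well defined almost surely you need the law of $X_s$ not to charge the exceptional null set --- precisely the absolute continuity you are trying to establish. The Bouleau--Hirsch theory circumvents this by mollifying the coefficients and showing the limiting derivative solves a linear equation with \emph{some} bounded adapted processes in place of the derivatives; all of that would have to be redone in the mixed setting. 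Second, the Picard/Gronwall scheme you invoke to get $X_t\in\mathbb{D}^{1,2}$ does not close in $L^2(\Omega)$ in the presence of the fractional term: the estimate \eqref{neu1} is pathwise and multiplies H\"older norms (a product of the type $\|B^H\|_\beta\,\|D_rX\|_\lambda$), so one must work conditionally on $B^H$ with stopping-time localization as in \cite{Mish}, and then verify that membership in a localized class $\mathbb{D}^{1,2}_{\mathrm{loc}}$ still feeds into a conditional Bouleau--Hirsch criterion, together with a Fubini-type identification of the frozen-path solution with the true solution. None of this looks impossible --- your localization step $\{b(X_t)\neq 0\}\subseteq\{\gamma_t>0\}$ and the final integration over the law of $B^H$ are correct and would finish the proof --- but as written the crux of the lemma is deferred to machinery that does not yet exist for mixed SDEs, and building it would be at least as long as the paper's own proof, which avoids Malliavin calculus for exactly this reason.
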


Let us mention that we could also prove that Lemma \ref{d} also holds for the set $\{ x \in \rr: c(x) \neq 0\}$ using similar arguments, but this will not be of importance for our purposes. The proof of Lemma \ref{d} closely follows the approach in \cite[Section 4]{Fan}. We will invoke the following two results. Lemma \ref{f2} is the statement of \cite[Lemma 4.5]{Fan} in dimension one, Lemma \ref{f1} is due to \cite[Section 2]{romito}.

\begin{lemma} \label{f2}
	Let $\rho \colon \rr \to [0, \infty)$ be a continuous function and $\delta \in (0, \infty)$. Denote $D_\delta = \{ x \in \rr : \rho(x) \leq \delta \}$ and define a function $h_\delta \colon \rr \to [0, \delta]$ with
	$$h_\delta (x) = \big( \inf \{ |x-z| : z \in D_\delta \} \big) \wedge \delta, \quad x \in \rr.$$
	Then $h_\delta$ has support in $\rr \setminus D_\delta$ and is globally Lipschitz continuous with Lipschitz constant 1. Moreover, for a probability measure $\mu$ on $\rr$, if for some $\delta >0$ the measure $\mu_\delta$ given by $\frac{d \mu_\delta}{d \mu} = h_\delta$ admits a density, then $\mu$ has a density on the set $\{ x \in \rr: \rho(x) >0 \}$.
\end{lemma}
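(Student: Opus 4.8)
The plan is to treat the statement in two independent parts: the elementary regularity of the cut-off $h_\delta$, and the measure-theoretic transfer of absolute continuity through the fixed weight $h_\delta$.

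First I would record the properties of $h_\delta$. Writing $d(x) = \inf\{|x - z| : z \in D_\delta\}$ for the distance to the closed set $D_\delta = \{\rho \le \delta\}$, the triangle inequality yields $|d(x) - d(y)| \le |x - y|$, so $d$ is $1$-Lipschitz, and since $h_\delta = d \wedge \delta$ is the pointwise minimum of $d$ with the constant $\delta$, it inherits the same Lipschitz constant. Continuity of $\rho$ makes $D_\delta$ closed, whence $d(x) = 0$ precisely on $D_\delta$ and $d(x) > 0$ precisely on $\{\rho > \delta\}$; consequently $\{h_\delta > 0\} = \{\rho > \delta\} \subseteq \rr \setminus D_\delta$, which is the asserted localization of the support.

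The analytic core is the transfer step at the one fixed level $\delta$ supplied by the hypothesis. Assuming $\mu_\delta$ has a Lebesgue density $g \ge 0$, so that $\int_A h_\delta \, d\mu = \int_A g \, dx$ for every Borel set $A$, I would propose $p := g/h_\delta$ (set to $0$ where $h_\delta = 0$) as the density of $\mu$ on the open set $\{\rho > \delta\}$. For Borel $A \subseteq \{\rho > \delta\}$, where $h_\delta > 0$, one has $\int_A p \, dx = \int_A h_\delta^{-1} \, d\mu_\delta = \int_A h_\delta^{-1} h_\delta \, d\mu = \mu(A)$, the middle identities being the defining relation of $\mu_\delta$ and the strict positivity of $h_\delta$. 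To keep every integral finite I would exhaust $\{\rho > \delta\}$ by compacta $K$, on each of which continuity gives $\inf_K h_\delta > 0$ and hence $h_\delta^{-1}$ bounded. This shows that $\mu$ restricted to $\{\rho > \delta\}$ is absolutely continuous with density $p$.

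The hard part is upgrading the conclusion from the single level $\{\rho > \delta\}$ to the full set $\{\rho > 0\}$, and this is exactly where the quantifier on $\delta$ is delicate. The transfer above is genuinely confined to $\{\rho > \delta\}$: if $\mu$ carried an atom at a point $x_0$ with $0 < \rho(x_0) \le \delta$, then $h_\delta(x_0) = 0$ would erase that atom from $\mu_\delta$, so $\mu_\delta$ could be absolutely continuous while $\mu$ fails to have a density on $\{0 < \rho \le \delta\}$; thus one fixed $\delta$ reaches no further than its own level. Since $\{\rho > 0\} = \bigcup_{n \in \N} \{\rho > 1/n\}$ is an increasing exhaustion, I would obtain the density on all of $\{\rho > 0\}$ by running the transfer step at each level $\{\rho > 1/n\}$ and patching the resulting densities, which agree Lebesgue-almost everywhere on their overlaps by uniqueness of the Radon--Nikodym derivative. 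This makes transparent that the density hypothesis is effectively required at every arbitrarily small scale rather than at a single one, and it is precisely in this exhausted form that the lemma will be applied in the proof of Lemma \ref{d}.
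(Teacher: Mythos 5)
Your proof is correct, and it is worth noting that the paper itself offers no argument for this lemma: it is quoted as \cite[Lemma 4.5]{Fan} in dimension one, so your write-up supplies a proof where the paper only cites. Your two elementary steps are the standard route: the distance function to the closed set $D_\delta$ is $1$-Lipschitz and so is its truncation $h_\delta$, and the density transfers through the weight $h_\delta$ on $\{\rho>\delta\}$, where $h_\delta>0$, via $\mu(A)=\int_A h_\delta^{-1}\,d\mu_\delta$ (all identities valid in $[0,\infty]$ since the integrands are nonnegative, so the compact exhaustion is not even needed). More substantively, you correctly spotted that the statement as transcribed, with the hypothesis for \emph{some} $\delta>0$, cannot yield a density on all of $\{\rho>0\}$: your atom example (a point mass at $x_0$ with $0<\rho(x_0)\le\delta$ is annihilated by $h_\delta$, so $\mu_\delta$ can be absolutely continuous while $\mu$ is not on $\{0<\rho\le\delta\}$) is a genuine counterexample to the literal wording. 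The correct reading is for \emph{every} $\delta>0$ (equivalently along $\delta=1/n$), followed by your exhaustion $\{\rho>0\}=\bigcup_{n}\{\rho>1/n\}$ and patching of the local densities, which agree almost everywhere on overlaps by uniqueness of Radon--Nikodym derivatives. This repaired form is exactly what the paper uses: in the proof of Lemma \ref{d} the bound \eqref{dp1} is established for an arbitrary $\delta>0$ (the constants there are allowed to depend on $\delta$), so $\mu_\delta$ admits a density for every $\delta$ and the conclusion on $D_b=\{x\in\rr : b(x)\neq 0\}$ is unaffected. One cosmetic remark: ``support in $\rr\setminus D_\delta$'' should be read as ``$h_\delta$ vanishes on $D_\delta$'', since the closure of $\{h_\delta>0\}=\{\rho>\delta\}$ may meet the boundary of $D_\delta$; your formulation $\{h_\delta>0\}\subseteq\rr\setminus D_\delta$ is the intended one.
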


\begin{lemma} \label{f1}
	Let $\mu$ be a finite measure on $\rr$. Assume that there exist $m \in \N$, $\gamma \in (0, \infty)$, $s \in (\gamma, m)$ and a constant $K \in (0, \infty)$ such that for all $\phi \in \mathcal{C}_b^\gamma$ and $h \in \rr$ with $|h | \leq 1$ it holds
	$$ \Big| \int_\rr \Delta_h^m \phi(x) d\mu (x) \Big| \leq K |h|^s \| \phi \|_{\mathcal{C}_b^\gamma} .$$
	Then $\mu$ has a density with respect to the Lebesgue measure on $\rr$.
\end{lemma}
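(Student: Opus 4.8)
The plan is to read the hypothesis as a regularity statement for $\mu$ in the scale of Besov spaces and then to bootstrap it into $L^1$. The first step is to match the test-function space to a Besov space: by its very definition as the closure of bounded smooth functions in the Hölder--Zygmund norm built from the $m$-th order difference $\Delta_h^m$, the space $\mathcal{C}_b^\gamma$ is exactly the separable ``little'' version of the Besov space $B^\gamma_{\infty,\infty}(\rr)$. A standard Besov duality then identifies its topological dual as $(\mathcal{C}_b^\gamma)' \cong B^{-\gamma}_{1,1}(\rr)$, where the pairing extends the usual integration pairing (it is precisely the closure in the definition of $\mathcal{C}_b^\gamma$ that makes this duality perfect). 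Since $\int_\rr \Delta_h^m \phi \, d\mu = \langle \phi, \Delta_{-h}^m \mu \rangle$, with $\Delta_{-h}^m\mu$ the $m$-th order difference of $\mu$ viewed as a distribution, the assumed estimate says exactly that the functional $\phi \mapsto \int_\rr \Delta_h^m\phi\,d\mu$ has operator norm at most $K|h|^s$ on $\mathcal{C}_b^\gamma$, i.e. $\|\Delta_{-h}^m\mu\|_{B^{-\gamma}_{1,1}} \leq K|h|^s$ for all $|h| \leq 1$.

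The second step is to feed this into the difference (modulus of smoothness) characterization of Besov spaces. Any finite measure lies in $B^0_{1,\infty}(\rr)$, because $\|\mu \ast \check\psi_j\|_{L^1} \leq \|\mu\|_{\mathrm{TV}}\,\|\check\psi_j\|_{L^1}$ is bounded uniformly in the Littlewood--Paley index $j$ (the $L^1$-norm of the dyadic kernel $\check\psi_j$ is scale invariant). Hence $\mu \in B^{-\gamma}_{1,\infty}$, and since $B^{-\gamma}_{1,1} \hookrightarrow B^{-\gamma}_{1,\infty}$ the bound above yields $\sup_{0<|h|\leq 1} |h|^{-s}\,\|\Delta_h^m\mu\|_{B^{-\gamma}_{1,\infty}} < \infty$. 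As $0 < s < m$ (which holds because $s \in (\gamma,m)$), the difference characterization lifts exactly $s$ derivatives and gives $\mu \in B^{s-\gamma}_{1,\infty}(\rr)$.

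The third step is the conclusion: because $s > \gamma$ the regularity index $s-\gamma$ is strictly positive, and the elementary embedding $B^\sigma_{1,\infty}(\rr) \hookrightarrow L^1(\rr)$ for $\sigma > 0$ (obtained by summing the geometric series $\sum_j 2^{-j\sigma}$ against the uniformly bounded quantities $2^{j\sigma}\|\mu\ast\check\psi_j\|_{L^1}$) shows that $\mu \in L^1(\rr)$. Thus $\mu$ is absolutely continuous with respect to Lebesgue measure, which is the assertion.

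The main obstacle is that the whole argument rests on two structural Besov facts: the exact duality $(\mathcal{C}_b^\gamma)' \cong B^{-\gamma}_{1,1}$, and the difference characterization that lifts $s$ derivatives in the $q=\infty$ scale for $0<s<m$. I would either cite these from a standard reference (Triebel) or, to keep the argument self-contained, reprove them directly at the Littlewood--Paley level: one estimates each block $\|\mu \ast \check\psi_j\|_{L^1}$ by duality against an $L^\infty$-function, convolving it with $\check\psi_j$ to produce a test function $\phi$ that is frequency-localized near $2^j$ and hence expressible (because $\hat\psi_j$ vanishes to high order at the origin) as an $m$-th order difference at scale $h \sim 2^{-j}$. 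Applying the hypothesis then contributes a factor $|h|^s \sim 2^{-js}$ against a test-function norm $\|\phi\|_{\mathcal{C}_b^\gamma} \sim 2^{j\gamma}$, yielding the decay $\|\mu\ast\check\psi_j\|_{L^1} \lesssim 2^{-j(s-\gamma)}$ directly and hence $\mu \in B^{s-\gamma}_{1,\infty} \hookrightarrow L^1$. Making this reproduction-of-a-frequency-block-by-differences step precise is the technical heart of the proof.
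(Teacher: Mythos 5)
The paper offers no proof of this lemma at all---it is imported verbatim from \cite[Section 2]{romito}---and your argument (reading $\mathcal{C}_b^\gamma$ as the little H\"older--Zygmund space $B^\gamma_{\infty,\infty}$, dualizing the hypothesis into $\|\Delta_{-h}^m \mu\|_{B^{-\gamma}_{1,1}} \leq K|h|^s$, lifting by the difference characterization to $\mu \in B^{s-\gamma}_{1,\infty}$, and concluding via $B^{\sigma}_{1,\infty} \hookrightarrow L^1$ for $\sigma = s-\gamma > 0$) is precisely the proof given in that cited source, so it is correct and takes essentially the same route. The step you rightly flag as the technical heart is also where the cited proof spends its effort: since $(e^{ih\xi}-1)^m$ vanishes on a lattice, a single increment $h \sim 2^{-j}$ cannot reproduce a whole frequency block, and one must either average over a range of increments per block or, equivalently, work with mollifications $\mu * \rho_\varepsilon$ and optimize $\varepsilon \sim |h|$ against the $\varepsilon^{-\gamma}$ growth of $\|\phi * \rho_\varepsilon\|_{\mathcal{C}_b^\gamma}$, exactly yielding the block decay $2^{-j(s-\gamma)}$ you state.
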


In proving Lemma \ref{d} the goal is to apply Lemma \ref{f2} with $P_{X_t}$ and $\rho (x) = |b(x)|$, $x \in \rr$, where $P_{X_t}$ denotes the law of $X_t$. Lemma \ref{f1} will be used in order to deduce that the measure $\mu_\delta$ in Lemma \ref{f2} admits a density. First, we establish some auxiliary results. In what follows we denote
$$Y(\varepsilon) = X_{T-\varepsilon} + b(X_{T-\varepsilon}) (W_T - W_{T- \varepsilon}) + c(X_{T-\varepsilon}) (B_T^H - B^H_{T- \varepsilon})$$
for $\varepsilon \in (0,T)$. Let us now briefly recall some basic results concerning the representation of fractional Brownian motion in terms of Brownian motion, which will play a role in the remainder of this section. There exists a standard Brownian motion $B=(B_t)_{t \in [0,T]}$ such that
$$B_t^H = \int_0^t K_H(t,s) dB_s, \quad t \in [0,T],$$
where $K_H$ denotes the following square integrable kernel
$$K_H(t,s) = c_H \Big( \frac{t}{s} \Big)^{H-\frac12} (t-s)^{H-\frac12} - (H- \frac12)  s^{\frac12 - H} \int_s^t u^{H- \frac{3}{2}} (u-s)^{H-\frac12} du,$$
with $s \in (0,t)$ and some appropriate constant $c_H$, see \cite{nmall} for details. We will assume that the underlying filtration $\mathcal{F}$ is such that $B$ is $\mathcal{F}$-adapted. Moreover, the processes $W$ and $B$ are independent by assumption.

\begin{lemma} \label{f3}
	Let $$\xi = X_{T-\varepsilon} + c(X_{T-\varepsilon}) \int_0^{T- \varepsilon} \Big( K_H(T,s) - K_H(T-\varepsilon,s) \Big) dB_s$$ and $\eta = X_{T-\varepsilon}$. For all $u \in \rr$ we have
	\begin{align*}
		\mathbb{E} \Big[ \exp \big( iu Y(\varepsilon) \big) | \mathbb{F}_{T- \varepsilon}\Big] = \exp \Bigg( iu \xi - \frac12 u^2 \Big( b^2(\eta) \varepsilon + c^2(\eta) \int_{T- \varepsilon}^T K_H^2 (T,s) ds \Big) \Bigg) ,
	\end{align*}
	i.e. given $\mathbb{F}_{T- \varepsilon}$ the random variable $Y(\varepsilon)$ is conditionally Gaussian with mean $\xi$ and variance $b^2(\eta) \varepsilon + c^2(\eta) \int_{T- \varepsilon}^T K_H^2 (T,s) ds$.
\end{lemma}

\begin{proof}
	We note that both $W_T - W_{T- \varepsilon}$ and $\int_{T- \varepsilon}^T K_H (T,s) dB_s$ are independent of $\mathbb{F}_{T- \varepsilon}$, moreover $X_{T-\varepsilon}$ is $\mathbb{F}_{T- \varepsilon}$-measurable. Thus, from this we get
	\begin{align*}
		& \mathbb{E} \Bigg[ \exp \Big( iu b(X_{T-\varepsilon}) (W_T - W_{T- \varepsilon}) + iu c(X_{T- \varepsilon}) \int_{T- \varepsilon}^T K_H (T,s) dB_s \Big)  \Big| \mathbb{F}_{T- \varepsilon}\Bigg] \\
		& \quad = \mathbb{E} \Bigg[ \exp \Big( iu b(y) (W_T - W_{T- \varepsilon}) + iu c(y) \int_{T- \varepsilon}^T K_H (T,s) dB_s \Big)  \Bigg]_{\Big| y = \eta} \\
		& \quad = \exp \Bigg( - \frac{\big( b^2(y) \varepsilon + c^2(y) \int_{T- \varepsilon}^T K^2_H (T,s) ds \big)}{2} u^2 \Bigg) _{\Big| y = \eta}.
	\end{align*}
	From the integral representation of fractional Brownian motion it consequently follows
		\begin{align*}
	& 	\mathbb{E} \Big[ \exp \big( iu Y(\varepsilon) \big) | \mathbb{F}_{T- \varepsilon}\Big]  = \exp (iu \xi)\\
	& \quad \times \mathbb{E} \Bigg[ \exp \Big( iu b(\eta) (W_T - W_{T- \varepsilon}) + iu c(\eta) \int_{T- \varepsilon}^T K_H (T,s) dB_s \Big) \Big| \mathbb{F}_{T- \varepsilon} \Bigg] \\
	& \quad = \exp \Bigg( iu\xi - \frac{\big( b^2(\eta) \varepsilon + c^2(\eta) \int_{T- \varepsilon}^T K^2_H (T,s) ds \big)}{2} u^2 \Bigg) .
	\end{align*}
\end{proof}

\begin{lemma} \label{f4}
	Let $\beta \in (0,H)$ and $\beta' \in (0, \frac12)$. Then it holds
	$$\mathbb{E} [ |X_T - Y(\varepsilon)|] \leq K \Big( \varepsilon + \varepsilon^{\beta' + \beta} + \varepsilon^{\beta' + \frac12} \Big)$$
	for some constant $K\in (0, \infty)$.
\end{lemma}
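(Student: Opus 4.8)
The plan is to subtract the two expressions directly. Since $b(X_{T-\varepsilon})$ and $c(X_{T-\varepsilon})$ are constant in the integration variable, one may write $b(X_{T-\varepsilon})(W_T - W_{T-\varepsilon}) = \int_{T-\varepsilon}^T b(X_{T-\varepsilon})\,dW_s$ and likewise for the fractional part, so that using \eqref{sde1} for $X_T - X_{T-\varepsilon}$ yields the decomposition
\begin{align*}
X_T - Y(\varepsilon) &= \int_{T-\varepsilon}^T a(X_s)\, ds + \int_{T-\varepsilon}^T \big( b(X_s) - b(X_{T-\varepsilon}) \big)\, dW_s \\
&\quad + \int_{T-\varepsilon}^T \big( c(X_s) - c(X_{T-\varepsilon}) \big)\, dB_s^H,
\end{align*}
which I would estimate term by term. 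The crucial structural feature is that the integrands of the second and third terms vanish at $s = T-\varepsilon$; combined with the H\"older regularity of $X$ this is what produces the improved exponents $\beta' + \tfrac12$ and $\beta' + \beta$ rather than the naive $\tfrac12$ and $\beta$.

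For the drift term, since $a$ is Lipschitz and hence of linear growth, and since $\mathbb{E}[\sup_{s}|X_s|] < \infty$, I would bound $\mathbb{E}[|\int_{T-\varepsilon}^T a(X_s)\,ds|] \leq \int_{T-\varepsilon}^T \mathbb{E}[|a(X_s)|]\,ds \leq K\varepsilon$, giving the first summand. For the Brownian term I would apply It\^o's isometry together with Jensen's inequality and the Lipschitz property of $b$:
$$\mathbb{E}\Big[\Big| \int_{T-\varepsilon}^T \big(b(X_s) - b(X_{T-\varepsilon})\big)\,dW_s\Big|\Big] \leq \Big( \mathbb{E}\Big[ \int_{T-\varepsilon}^T \big(b(X_s) - b(X_{T-\varepsilon})\big)^2\,ds \Big] \Big)^{1/2} \leq K\Big( \mathbb{E}\Big[ \int_{T-\varepsilon}^T |X_s - X_{T-\varepsilon}|^2\,ds\Big]\Big)^{1/2}.$$
Invoking $|X_s - X_{T-\varepsilon}| \leq \|X\|_{\beta'}\,|s-(T-\varepsilon)|^{\beta'} \leq \|X\|_{\beta'}\,\varepsilon^{\beta'}$ and the finiteness of $\mathbb{E}[\|X\|_{\beta'}^2]$ (guaranteed by the Lipschitz assumptions, or by the localization of Remark \ref{rem}), the inner integral is $\leq K\varepsilon^{1+2\beta'}$, so this term is $\leq K\varepsilon^{\beta' + \frac12}$.

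The heart of the argument is the fractional integral, which I would control with the pathwise estimate \eqref{neu1} applied to the integrand $r \mapsto c(X_r) - c(X_{T-\varepsilon})$ on the interval $[T-\varepsilon, T]$. Since $c$ is Lipschitz, $|c(X_r) - c(X_{T-\varepsilon})| \leq K\|X\|_{\beta'}\,(r-(T-\varepsilon))^{\beta'}$, and the difference quotients of $c(X_\cdot) - c(X_{T-\varepsilon})$ are controlled by $\|X\|_{\beta'}$; feeding these into \eqref{neu1} with $\lambda = \beta'$ and the chosen $\beta \in (0,H)$ (where $\beta' + \beta > 1$ can be arranged because $H > \tfrac12$) reduces both resulting integrals to a Beta-type integral $\int_{T-\varepsilon}^T (r-(T-\varepsilon))^{\beta'-\alpha}(T-r)^{\alpha+\beta-1}\,dr$. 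The scaling substitution $r = T-\varepsilon + \varepsilon u$ evaluates each as a constant multiple of $\varepsilon^{\beta' + \beta}$, so that pathwise $|\int_{T-\varepsilon}^T (c(X_r)-c(X_{T-\varepsilon}))\,dB_r^H| \leq K\|B^H\|_\beta\|X\|_{\beta'}\,\varepsilon^{\beta'+\beta}$. Taking expectations and applying Cauchy--Schwarz, using $\mathbb{E}[\|B^H\|_\beta^2] < \infty$ (Gaussianity/Fernique) and $\mathbb{E}[\|X\|_{\beta'}^2] < \infty$, yields the last summand $K\varepsilon^{\beta'+\beta}$.

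Summing the three bounds gives the claimed estimate. I expect the main obstacle to be the fractional term: one must exploit the cancellation at the left endpoint $T-\varepsilon$, so that the factor $(r-(T-\varepsilon))^{\beta'}$ appears and upgrades the exponent from $\beta$ to $\beta'+\beta$, and one must check that the singular weights $(r-(T-\varepsilon))^{-\alpha}$ and $(T-r)^{\alpha+\beta-1}$ in \eqref{neu1} remain integrable after the substitution — which is exactly where the constraints $\alpha \in (1-H,\tfrac12)$ and $\beta' + \beta > 1$ enter.
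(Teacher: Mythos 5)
Your proposal is correct and follows essentially the same route as the paper: the identical three-term decomposition of $X_T - Y(\varepsilon)$, the Lipschitz/linear-growth bound $K\varepsilon$ for the drift, an $L^2$-type bound (the paper uses Burkholder--Davis--Gundy where you use It\^{o} isometry plus Jensen, an immaterial difference) giving $\varepsilon^{\beta'+\frac12}$ for the Brownian term, and the estimate \eqref{neu1} applied to $r \mapsto c(X_r)-c(X_{T-\varepsilon})$ with $\lambda = \beta'$ yielding $\|B^H\|_\beta\,\varepsilon^{\beta'+\beta}$ for the fractional term, with moments of $\|B^H\|_\beta$ and the localization of Remark \ref{rem} handling the expectation exactly as in the paper. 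Your explicit flagging of the constraint $\beta'+\beta>1$ needed for \eqref{neu1} is a point the paper leaves implicit, and is correctly resolved since the bound for larger exponents implies the stated one.
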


\begin{proof}
	Throughout this proof we denote by $M_1, M_2, \ldots$ unspecified positive and finite constants. Recall that $\| X\|_{\beta'} < \infty$ almost surely. According to Remark \ref{rem} we may assume that $\| X\|_{\beta'} \leq M_1$. We have
	\begin{align*}
		|X_T - Y(\varepsilon)| = \Big| \int_{T- \varepsilon}^T a(X_r) dr + \int_{T- \varepsilon}^T \big( c(X_r) - c(X_{T- \varepsilon}) \big) dB_r^H + \int_{T- \varepsilon}^T \big( b(X_r) - b(X_{T- \varepsilon}) \big) dW_r \Big| .
	\end{align*}
	Due to our assumptions and the Lipschitz continuity of $a$ we obtain the estimate
	\begin{align} \label{pn1}
		\big| \int_{T- \varepsilon}^T a(X_r) dr \Big| \leq M_2 \varepsilon.
	\end{align}
	Furthermore, according to equation \eqref{neu1} we estimate
	\begin{align} \label{pn2}
		\begin{split}
		& \Big| \int_{T- \varepsilon}^T \big( c(X_r) - c(X_{T- \varepsilon}) \big) dB_r^H \Big| \\
		&  \leq M_3 \| B^H\|_\beta \Big( \int_{T- \varepsilon}^T |c(X_r) - c(X_{T-\varepsilon})| (r-T+\varepsilon)^{-\alpha} (T-r)^{\alpha + \beta -1} dr \\
		& \quad + M_4 \int_{T- \varepsilon}^T (r-T + \varepsilon)^{\beta' - \alpha} (T-r)^{\alpha + \beta -1} dr \Big) \\
		& \leq M_5 \| B^H\|_\beta \Big( \int_{T- \varepsilon}^T (r-T+\varepsilon)^{\beta'-\alpha} (T-r)^{\alpha + \beta -1} dr + M_6 \varepsilon^{\beta' + \beta} \Big) \\
		& \leq M_7 \| B^H\|_\beta \varepsilon^{\beta' + \beta}.
		\end{split}
	\end{align}
	In addition, the Burkholder-Davis-Gundy inequality gives us
	\begin{align} \label{pn3}
	\begin{split}
	& \mathbb{E}\Big[ \Big| \int_{T- \varepsilon}^T \big( b(X_r) - b(X_{T- \varepsilon}) \big) dW_r \Big| \Big]  \leq M_8  \mathbb{E}\Big[ \Big( \int_{T- \varepsilon}^T \big| b(X_r) - b(X_{T- \varepsilon})\big| ^2  dr \Big) ^{\frac12} \Big] \\
	& \leq M_9 \mathbb{E}\Big[ \Big( \int_{T- \varepsilon}^T (r-T + \varepsilon)^{2\beta'}  dr \Big) ^{\frac12} \Big] \leq M_{10} \varepsilon^{\beta' + \frac12}.
	\end{split}
	\end{align}
	Recall that there is a random variable $A$ with finite moments of every order such that
	$$ | B_v^H -B_u^H | \leq A |v-u| ^\beta  \quad \forall u,v \in [0,T]$$
	almost surely, see e.g. \cite[Lemma 7.4]{Nual2}. Using this, it is easy to see that for every $k \in \N$
	$$ \mathbb{E} \Big[ \| B^H \|_{\beta}^k \Big] \leq M_{11}.$$
	Combining this with \eqref{pn1}, \eqref{pn2} and \eqref{pn3} completes the proof.
\end{proof}

\noindent \textit{Proof of Lemma \ref{d}.} Without loss of generality we only prove that the law of $X_T$ is absolutely continuous on the set $D_b$. The goal is to apply Lemma \ref{f2}. To this end, define the function $\rho \colon \rr \to [0, \infty)$ to be $\rho(x) = |b(x)|$, $x \in \rr$, and the measure $\mu_\delta$ given by $d\mu_\delta (z) = h_\delta (z) dP_{X_T} (z)$. It suffices to prove that $\mu_\delta$ admits a density with respect to the Lebesgue measure, and we will make use of Lemma \ref{f1} in order to show this. According to the latter, it suffices to find $m \in \N$, $\gamma \in (0, \infty)$, $s \in (\gamma, m)$ such that
\begin{equation} \label{dp1}
	\Big| \mathbb{E} \Big[ h_\delta (X_T) \Delta_h^m \phi (X_T) \Big] \Big| \leq K |h|^s \| \phi\|_{\mathcal{C}_b^\gamma} 
\end{equation}
for all $h \in \rr$ with $|h| \leq 1$, for all $\phi \in \mathcal{C}_b^\gamma$ and some constant $K \in (0, \infty)$. The specific choice of $m$, $\gamma$ and $s$ will be given later at the end of this proof. In the following, as before we denote by $M_1, M_2, \ldots$ unspecified positive and finite constants. Using the notation and results of Lemma \ref{f2} and Lemma \ref{f4} we estimate
\begin{align} \label{dp2}
\begin{split}
	& \Big| \mathbb{E} \Big[ h_\delta (X_T) \Delta_h^m \phi (X_T) \Big] \Big|  \leq 	\Big| \mathbb{E} \Big[ \Big( h_\delta (X_T) - h_\delta (X_{T-\varepsilon}) \Big) \Delta_h^m \phi (X_T) \Big] \Big| \\
	& \quad + \Big| \mathbb{E} \Big[ h_\delta (X_{T-\varepsilon}) \Big( \Delta_h^m \phi (X_T) -  \Delta_h^m \phi \big( Y (\varepsilon) \big) \Big) \Big] \Big| + \Big| \mathbb{E} \Big[ h_\delta (X_{T-\varepsilon}) \Delta_h^m \phi \big( Y (\varepsilon) \big) \Big] \Big| \\
	& \leq M_1 \| \phi\|_{\mathcal{C}_b^\gamma}  |h|^\gamma \mathbb{E} [ |X_T - X_{T-\varepsilon}|] + M_2 \| \phi\|_{\mathcal{C}_b^\gamma}  \mathbb{E} [ |X_T - Y(\varepsilon)|]^\gamma \\
	& \quad + \Big| \mathbb{E} \Big[  h_\delta (X_{T-\varepsilon}) \mathbb{E} \big[ \Delta_h^m \phi \big( Y (\varepsilon) \big) \big| \mathbb{F}_{T-\varepsilon} \big] \Big] \Big| \\
	& \leq M_3 \| \phi\|_{\mathcal{C}_b^\gamma}  \Big( |h|^\gamma \varepsilon^{\beta'} + \big( \varepsilon + \varepsilon^{\beta' + \beta} + \varepsilon^{\beta'+\frac12} \big)^\gamma \Big) + \Big| \mathbb{E} \Big[  h_\delta (X_{T-\varepsilon}) \mathbb{E} \big[ \Delta_h^m \phi \big( Y (\varepsilon) \big) \big| \mathbb{F}_{T-\varepsilon} \big] \Big] \Big| .
\end{split}
\end{align}
Recall that, by Lemma \ref{f3}, $Y(\varepsilon) | \mathbb{F}_{T-\varepsilon} \sim \mathcal{N} (\xi, \sigma^2 (\eta))$ with
$$\sigma^2 (y) = b^2(y) \varepsilon + c^2(y) \int_{T- \varepsilon}^T K_H^2 (T,s) ds , \quad y \in \rr,$$
and that $h_\delta (y) = 0$ for all $y \in \rr$ with $|b(y)| \leq \delta$. Let $p_y \colon \rr \to \rr$ be the density of a Gaussian distribution with mean zero and variance $\sigma^2 (y)$. Then
\begin{align*}
	\sup_{y \in \rr : |b(y)| \geq \delta} \int_\rr \Big| \frac{d^k}{dz^k} p_y(z) \Big| dz & \leq M_4 \sup_{y \in \rr : |b(y)| \geq \delta} (\sigma^2)^{-\frac{k}{2}} \leq M_4 \delta^{-k} \varepsilon^{-\frac{k}{2}}
\end{align*}
for every $k \in \N$. From this we obtain
\begin{align*}
	\Big| \mathbb{E} \Big[  h_\delta (X_{T-\varepsilon}) \mathbb{E} \big[ \Delta_h^m \phi \big( Y (\varepsilon) \big) \big| \mathbb{F}_{T-\varepsilon} \big] \Big] \Big| \leq M_5  \| \phi\|_{\mathcal{C}_b^\gamma}  |h|^m \varepsilon^{-\frac{m}{2}}.
\end{align*}
Combining the latter inequality with \eqref{dp2} gives us
\begin{align*}
	\Big| \mathbb{E} \Big[ h_\delta (X_T) \Delta_h^m \phi (X_T) \Big] \Big|  & \leq M_6 \| \phi\|_{\mathcal{C}_b^\gamma} \Big( |h|^\gamma \varepsilon^{\beta'} + \big( \varepsilon + \varepsilon^{\beta' + \beta} + \varepsilon^{\beta'+\frac12} \big)^\gamma + |h|^m \varepsilon^{-\frac{m}{2}} \Big) \\
	& \leq M_7 \| \phi\|_{\mathcal{C}_b^\gamma} \Big( |h|^\gamma \varepsilon^{\beta'} +  \varepsilon^{(\beta'+\frac12) \gamma}  + |h|^m \varepsilon^{-\frac{m}{2}} \Big) .
\end{align*}
Now it is  not difficult to see that the assumptions of Lemma \ref{f1} are satisfied. For example we can set $m=4$, $\gamma=1$, $\varepsilon = |h|^{\frac{4}{3}}$ and choose $\beta' \in (0, \frac12)$ such that $(\beta' + \frac12) \frac{4}{3} >1$. Then it is easy to see that \eqref{dp1} holds and the proof is complete.  \hfill  {\Large $\Box$ } \newline

Now we are in position to prove Theorem \ref{genIto}. \newline

\noindent \textit{Proof of Theorem \ref{genIto}.} The proof borrows ideas from the theory of approximate identities as outlined in \cite[Section A]{LRS}. Without loss of generality we assume that $f$ has compact support, implying in our case that $f$, $f'$ and $f''$ are bounded, and we assume that $f''$ has one discontinuity point at $\xi_1=0$. According to \cite[Lemma 1-3]{LRS} one can choose a sequence $(\phi_n)_{n \in \N}$ of twice continuously differentiable non-negative functions such that for $f_n := f * \phi_n$, $n \in \N$,
$$ \lim_{n \to \infty} \| f - f_n \|_\infty + \| f' - f_n' \|_\infty = 0,$$
and 
$$ \lim_{n \to \infty} | f''(x) - f_n''(x) | = 0$$
for all continuity points $x$ of $f''$, and $\| f_n''\|_\infty \leq K$ for some constant $K \in (0, \infty)$ independent of $n \in \N$. Let $t \in [0,T]$. By Theorem \ref{Ito}
\begin{align} \label{pgi1}
\begin{split}
	f_n (X_t) & = f_n(X_0) + \int_0^t f_n'(X_s) b(X_s) dW_s + \int_0^t f_n'(X_s) c(X_s) dB^H_s \\
	& \quad +  \int_0^t \Big( \frac{1}{2}b^2(X_s) f_n''(X_s) +a(X_s) f_n'(X_s) \Big) ds.
\end{split}
\end{align}
By our assumptions we obtain convergence
\begin{align*}
	\lim_{n \to \infty} \int_0^t f_n'(X_s) b(X_s) dW_s & = \int_0^t f'(X_s) b(X_s) dW_s, \\	\lim_{n \to \infty} \int_0^t f_n'(X_s) a(X_s) ds & = \int_0^t f'(X_s) a(X_s) ds,
\end{align*}
where the first convergence is uniformly in probability and the second convergence holds almost surely. Now we turn to the fractional integral in \eqref{pgi1}. By assumption
$$ f'(X) c(X) \in C^{\frac12 -},$$
so that the fractional integral
$$\int_0^t f'(X_s) c(X_s) dB_s^H$$
exists and agrees with the Riemann-Stieltjes integral. In particular, we have $D_{0+}^\alpha f'(X) c(X) \in L_1$, where $\alpha \in (1-H, \frac12)$, and $D_{0+}^\alpha f_n'(X) c(X)$ converges in $L_1$ to $D_{0+}^\alpha f'(X) c(X)$. Thus, by dominated convergence and the definition of the fractional integral
\begin{align*}
	\lim_{n \to \infty} \int_0^t f_n'(X_s) c(X_s) dB_s^H & = \lim_{n \to \infty} (-1)^\alpha \int_0^t D_{0+}^\alpha \big( f_n'(X) c(X) \big) (s)  D_{t-}^{1-\alpha} B_{t-}^H (s) ds \\
	& =  (-1)^\alpha \int_0^t D_{0+}^\alpha \big( f'(X) c(X) \big) (s)  D_{t-}^{1-\alpha} B_{t-}^H (s) ds \\
	& = \int_0^t f'(X_s) c(X_s) dB_s^H .
\end{align*}
It remains to consider the term in \eqref{pgi1} with the second derivative. In order to prove its convergence we will make use of Lemma \ref{d}. We write
\begin{align*}
\frac12 \int_0^t f_n''(X_s) b^2(X_s) ds = \frac12 \int_0^t \mathbf{1}_{\{ |X_s| > \frac{1}{n} \}}f_n''(X_s) b^2(X_s) ds + \frac12 \int_0^t \mathbf{1}_{\{ |X_s| \leq \frac{1}{n} \}}f_n''(X_s) b^2(X_s) ds .
\end{align*}
From dominated convergence we derive
$$ \lim_{n \to \infty} \frac12 \int_0^t \mathbf{1}_{\{ |X_s| > \frac{1}{n} \}}f_n''(X_s) b^2(X_s) ds = \frac12 \int_0^t \mathbf{1}_{\{ X_s \neq 0 \}}f''(X_s) b^2(X_s) ds.$$
From Fatou's Lemma, for every sequence $(h_n)_{n \in \N}$ of measurable, non-negative and bounded functions with $\| h_n \|_\infty \leq K$ for all $n \in \N$ and some constant $K \in (0, \infty)$, we obtain
 \begin{align*}
	 \limsup_{n \to \infty} \int_0^t \mathbf{1}_{\{ |X_s| \leq \frac{1}{n} \}}h_n(X_s) b^2(X_s) ds & \leq \int_0^t  \limsup_{n \to \infty} \mathbf{1}_{\{ |X_s| \leq \frac{1}{n} \}}h_n(X_s) b^2(X_s) ds \\
	 & \leq \int_0^t \mathbf{1}_{\{ |X_s| = 0 \}} \limsup_{n \to \infty} h_n(X_s) b^2(X_s) ds \\
	 & \leq K \int_0^t \mathbf{1}_{\{ X_s = 0 \}} b^2(X_s) ds.
 \end{align*}
Now, by Lemma \ref{d}
$$ \mathbb{E} \Big[ \int_0^t \mathbf{1}_{\{ X_s = 0 \}} b^2(X_s) ds \Big] = b^2 (0) \int_0^t P(X_s=0) ds = 0,$$
which yields that
$$\int_0^t \mathbf{1}_{\{ X_s = 0 \}} b^2(X_s) ds =0$$
almost surely. Therefore
$$ \limsup_{n \to \infty} \int_0^t \mathbf{1}_{\{ |X_s| \leq \frac{1}{n} \}}h_n(X_s) b^2(X_s) ds = 0,$$
and in case of the particular choice $h_n = |f_n'' - f''|$
$$ \limsup_{n \to \infty} \int_0^t \mathbf{1}_{\{ |X_s| \leq \frac{1}{n} \}} \Big| f_n'' (X_s) - f''(X_s) \Big|  b^2(X_s) ds = 0.$$
Finally, we get that
\begin{align*}
 	& \Big| \frac12 \int_0^t f_n''(X_s) b^2(X_s) ds - \frac12 \int_0^t f''(X_s) b^2(X_s) ds \Big| \\
 	& \leq \frac12 \Big| \int_0^t\mathbf{1}_{\{ |X_s| > \frac{1}{n} \}} \Big( f_n''(X_s) - f''(X_s) \Big) b^2(X_s) ds \Big| +  \frac12 \int_0^t\mathbf{1}_{\{ |X_s| \leq \frac{1}{n} \}}  \Big| f_n''(X_s) - f''(X_s) \Big| b^2(X_s) ds \\
 	& \to 0,
\end{align*}
as $n \to \infty$. Overall, we have shown that the assertion follows by letting $n \to \infty$ in \eqref{pgi1}.  \hfill  {\Large $\Box$ }

\subsection*{Acknowledgement} The author would like to thank David Nualart for some interesting discussions.

\bibliographystyle{amsplain}
\bibliography{lit}

\end{document}